\newtheorem{thm}{Theorem}
\newtheorem{cor}[thm]{Corollary}
\theoremstyle{definition}
\newtheorem*{defi}{Definition}
\newtheorem*{rem}{Remark}
\newcommand{\abs}[1]{\left\vert#1\right\vert}
\begin{document}

\title{Reviews of Symbolic Moment Calculus}
\author{Thotsaporn Aek Thanatipanonda\\
Mahidol University International College \\
Nakhon Pathom, Thailand}
\date{March 26, 2020}

\maketitle

\begin{abstract}
As a former engineering student, I have a great
interest in a real world application of mathematics. Probability
is something I can relate to. I am lucky enough that
after I switched to Mathematics, this is one of 
many interests of my Ph.D. advisor, Doron Zeilberger, as well. 
In this article we create a program to apply the 
\textit{overlapping stage approach}
to calculate the moments $E[X^r]$ and 
$E[(X-\mu)^r]$ of combinatorial objects. 
We also show the normality property of their
distributions when these moments are easy 
enough to calculate.
\end{abstract}

The programs accompanied this article are 
\texttt{SchurT.txt}, \texttt{InvMaj.txt}, \texttt{Boolean.txt}
which can be found from the author's website:
\texttt{thotsaporn.com}.

\section{Introduction}
The expectation functional, $E[X]$ is a powerful tool to study combinatorial objects,
and often gives us quite useful information.
For example, the common technique in probabilistic method
to show the existence of the objects is to show $E[X] < 1$, i.e.
  \[E[X]<1 \text{ implies } Pr(X = 0) >0.\]
For the higher moments, $E[X^r]$,
the computation gets complicated fast and we need computers to do symbolic
computation. The main technique to deal 
with the higher moment is called
\textit{the overlapping stage approach}
and has already been demonstrated in \cite{Z2},
\cite{Z3}. We briefly describe this approach below.

\textbf{Calculation of Moments}

It is easy to see that $E[X^0]=1$.
The computation for $E[X]$ is somewhat harder.
The other higher moments than that are very 
hard to do without a computer. 

Write
\[ X = \sum_{S} X_S , \]

where the sum is over all the objects of interest $S$ over some domain,
and $X_S$ is the {\it indicator} random
variable that is 1 if the object $S$ satisfies the assigned property and
0 otherwise. 

We also have
\[
X^r = \sum_{i_1=1}^n\sum_{i_2=1}^n ... \sum_{i_r=1}^n X_{i_1}X_{i_2}...X_{i_r},
\]

and by {\it linearity of expectation} and {\it symmetry}, we further have

\[
E[X^r] = \sum_{i_1=1}^n\sum_{i_2=1}^n ... \sum_{i_r=1}^n E[X_{i_1}X_{i_2}...X_{i_r}],
\]

The difficulty of calculating the higher moment lies 
in how these random objects intersect with each other.

There are some former works of Doron Zeilberger as in
\cite{Z1,Z2,Z3,Z4,Z5,Z6} that we can study.  
For brute force calculation referred to \cite{Z1}. 
For overlapping stage approach referred to \cite{Z2,Z3}. 
For asymptotically normal on different statistics of permutations
refer to \cite{Z5,Z6}. This work can be viewed as a continuation of \cite{Z2,Z3}. 

Apart from the mathematical contents we are presenting, 
the goals (in a bigger picture) of this article are to
\begin{enumerate}
\item Serve as an introductory reading for an
interested reader.
\item Give examples of combinatorial objects that
the moments can be computed from. We consider
Schur triples, inversion and major index, Boolean boards
and Boolean functions.
\item Demonstrate an application of symbolic computation
which, in this case, was done by Maple program.
\end{enumerate}

\section{Monochromatic Schur Triples on $[1,n]$}

Monochromatic Schur Triples on $[1,n]$ is a topic in Ramsey theory.
We let $\mathcal{U}$ be set of all $c$-integer-colorings
of $[1,n]$, and $X := X(u)$ be the number of monochromatic Schur triples
$S := \{x,y,x+y\}$ of $[1,n]$ in $u \in \mathcal{U}$.
We calculate $E[X^r]$, the $r^{th}$ moment of $X.$

\textbf{The first moment:} 

A triple $S$ can be written as
$S = S_1 \cup S_2$ where $S_1$ are triples of the form $\{x,x,2x\}$
and $S_2$ are triples of the form $\{x,y,x+y\}, x \neq y$.
\begin{align*}
E[X_{S_1}] &= c \cdot \dfrac{1}{c^2} = \dfrac{1}{c}, \\
E[X_{S_2}] &= c \cdot \dfrac{1}{c^3} = \dfrac{1}{c^2}.
\end{align*}

Already there are two formulas for $E[X]$
depending on whether
the length of the interval, $n$, is odd or even. 

\noindent \textbf{Case 1: $n$ is odd}
\begin{align*}
E[X] &= \sum_{S} E[X_S] \\
&= \sum_{S_1} E[X_{S_1}]+\sum_{S_2} E[X_{S_2}] \\
&= \frac{1}{c}\left(\frac{n-1}{2}\right) + \frac{1}{c^2}( (n-2) +(n-4)+(n-6)+...+1) \\
&= \frac{1}{c}\left(\frac{n-1}{2}\right) + \frac{1}{c^2}\left( \frac{n-1}{2}\right)^2 \\
&= \frac{(n-1)(n-1+2c)}{4c^2}.
\end{align*}
\textbf{Case 2: $n$ is even}
\begin{align*}
E[X] &= \sum_{S_1} E[X_{S_1}]+\sum_{S_2} E[X_{S_2}] \\
&= \frac{1}{c}\left(\frac{n}{2}\right) + 
\frac{1}{c^2}( (n-2) +(n-4)+(n-6)+...+0) \\
&= \frac{1}{c}\left(\frac{n}{2}\right) + 
\frac{1}{c^2}\left(\frac{n}{2}\right)\left(\frac{n}{2}-1\right) \\
&= \frac{n(n-2+2c)}{4c^2}.
\end{align*}

\textbf{The second moment:} 

The calculation for the second moment is considerably harder.
We consider $2$-tuples $[S_1,S_2]$ of triples $\{x,y,x+y\}$ of $[1,n].$ 

\[ E[X^2] = E\left[\sum_{S_1}X_{S_1} \cdot \sum_{S_2}X_{S_2}\right] 
= \sum_{[S_1,S_2]}E[X_{S_1}X_{S_2}].\]

The sum depends on how $S_1$ and $S_2$ interact with each other.
For each configuration of $S_1 \cup S_2, \;\ 2 \leq \abs{S_1 \cup S_2} \leq 6.$
For example,  $\abs{S_1 \cup S_2} = 6$ when $\abs{S_1} = \abs{S_2} = 3$ and
the two triples do not intersect. 

Let $K := S_1 \cup S_2$ represent each of the isomorphic configurations.
While the first moment has only 2 isomorphic configurations
$\{x,x,2x\}$ and $\{x,y,x+y\}, \;\ x \neq y$, the second moment has 42 configurations. 

We denote the {\it weight} $W(K)$ to be the number of 
isomorphic configuration $K$ that occurs in the sum.
For each $K$, we find $E[X_{S_1}X_{S_2}] $ and $W(K)$,
then apply the following relation for $E[X^2],$
\[
E[X^2] =  \sum_{[S_1,S_2]}E[X_{S_1}X_{S_2}] 
=\sum_{K \in \mathcal{K}} E[X_{S_1}X_{S_2}] \cdot W(K).
\] 

The first quantity, $E[X_{S_1}X_{S_2}]$ is not hard to compute. 
Let $p := \abs{S_1 \cup S_2}$.
\[ E[X_{S_1}X_{S_2}] = 
\begin{cases} \dfrac{c}{c^p}, & \text{ if } \abs{S_1 \cap S_2} \neq 0, \\
 \dfrac{c^2}{c^p} & \text{ if } \abs{S_1 \cap S_2} = 0.
\end{cases}\]

The second quantity, $W(K)$ is harder to compute. One way to do so is to use
Goulden-Jackson Cluster method, see \cite{GouldJack}.
This method gives us a generating function in terms of $n$. 

\textbf{Formulas:} $E[X^2]$ has 12 formulas up to the values of $n$ mod 12.

We obtain the formulas by applying both Goulden-Jackson Cluster method
and {\it polynomial ansatz}.
For each $K$ contributes to $E[X^2]$, we first find the generating function using
Goulden-Jackson Cluster method. This gives us the period $l$ of $K$.
Then we count $W(K)$ numerically
for each $n$. Finally we interpolate the polynomial, $f(n)$
of degree at most 4 with each period $l$ that fits the numerical results.
This polynomial ansatz actually gives the rigorous proof of the second moment. 

As an example, we show the formula of $E[X^2]$ for $n \equiv$ 1 mod 12,
\[E[X^2] = \dfrac{(n-1)(24c^3-76c-27n+65-9n^2+12cn^2+24c^2n+3n^3-16c^2)}{48c^4}.\]

The complete solutions of $E[X^2]$ and $E[(X-\mu)^2]$
can be found in Maple program
\texttt{SchurT.txt} on the author's web site. 

\textbf{The higher moment:} 

We now consider $m$-tuples $[S_1,S_2,...,S_m]$ of triples $\{x,y,x+y\}$ of $[1,n].$ 
\[E[X^m] = \sum_{[S_1,S_2,..,S_m]}E[X_{S_1}X_{S_2}..X_{S_m}].\]

We again consider for each isomorphic configuration $K$,
its weight $W(K)$ and its probability $E[X_{S_1}X_{S_2}..X_{S_m}]$. 

For $E[X^3]$, there are more than 500 isomorphic configurations
of $S_1 \cup S_2 \cup S_3$
with at least 72 formulas up to the values of $n$ mod 72. 
We tried for about 3 weeks, but in the end, 
we felt like it is too much. At least 
we know it is {\it not easy} to compute these moments.

\section{Method of Moments for Asymptotic Normality}

In the case where we work with objects with a nice generating,
we will try to show that asymptotically their distribution is normal.
In this section, we will layout the method of moments for this purpose.
We first define the moment generating function $\phi(t).$

\begin{defi}
$\phi(t) := E[e^{tX}]= \begin{cases} \sum\limits_x e^{tx}p(x) &\mbox{if } x \mbox{ is discrete} \\ 
\int_{-\infty}^{\infty} e^{tx}f(x) dx& \mbox{if } x \mbox{ is continuous}. \end{cases}$ \\
\end{defi}

Moment generating function of standard normal distribution looks like 
\[ \phi(t) = \dfrac{1}{\sqrt{2\pi}}\int_{-\infty}^{\infty} e^{tx}e^{-\frac{x^2}{2}} dx 
 = \dfrac{1}{\sqrt{2\pi}}e^{\frac{t^2}{2}}\int_{-\infty}^{\infty} e^{-\frac{(x-t)^2}{2}} dx  
 = e^{\frac{t^2}{2}}. \]

To show asymptotic normality of some combinatorial objects,
we will show that, asymptotically, their moment generating functions 
(after centralize and normalize) are $e^{\frac{t^2}{2}}.$
 
Another method is to match their coefficients. 
The Maclaurin series of 
$e^{\frac{t^2}{2}}$ is $\displaystyle \sum_{r=0}^{\infty} \dfrac{t^{2r}}{r!2^r}.$ 

On the other hand, the general series expansion of moment
generating function (for the discrete objects) is
\[ \phi(t)  = E[e^{tX}]  = \sum_x\sum_n \dfrac{t^nx^n}{n!}p(x)
= \sum_n \dfrac{t^n}{n!} \sum_x x^np(x) = \sum_n \dfrac{t^n}{n!}m_n,    \]
where $m_n = E[X^n]$. We note that $E[X^n] = \phi^{n}(0)$ as well.

Comparing these two series, we need to show that 
$m_{2r}=\dfrac{(2r)!}{r!2^r}$ and $m_{2r-1}=0.$

\section{Inversion and Major Index}
Inversion number of a permutation is the number of ``misplace''
between the pair of elements. Let $\pi$  be a permutation. 
The inversion number, $inv(\pi),$ 
is the number of pair $(i,j)$ such that $i>j$ and $\pi(i)<\pi(j).$
For example, $inv(52314) = 6$ from the pairs 
$(5,2), (5,3), (5,1), (5,4), (2,1), (3,1).$ As an application, the 
inversion numbers are used in the 
Laplace expansion when we calculate the determinant
of squared matrix.

The probability generating function over 
all permutation of length $n$ is defined as 
\[  F_n(q) := \dfrac{1}{n!}\sum_{\pi \in S_n} q^{inv(\pi)}  .\]

$F_n(q)$ has a nice formula which can be shown by induction on $n$,
\[    F_n(q) = \dfrac{1}{n!}\prod_{i=1}^n \dfrac{1-q^i}{1-q} .\]
The mean, $\mu_n := E[X]$, of inversion number over all
permutation of length $n$ is $\dfrac{n(n-1)}{4}$.  
This can be seen combinatorially or through generating function,
i.e. $E[X] = F_n'(q)|_{q=1}.$
Therefore, the probability generating function about the mean is
\[   G_n(q) = \dfrac{1}{n!q^{n(n-1)/4}}\prod_{i=1}^n \dfrac{1-q^i}{1-q} .\] 
From here, we (or our computer friend) derive that 
the variance, $\sigma_n^2 = Var_n := E[(X-\mu)^2]$, is $\dfrac{n(n-1)(2n+5)}{72}.$  

On the other hand, the major index of a permutation, $maj(\pi),$ 
is the sum of the positions of the descents of the permutation, i.e.
\[ maj(\pi) =  \sum_{\pi(i)>\pi(i+1)}  i. \]
For example  $maj(52314) = 1+3 = 4.$

It is a well known result that the distributions over all 
permutations of length $n$ of inversion number
and major index are the same. This fact was shown
by MacMahon in 1913, \cite{Mac}. 

William Feller (\cite{Fe}, 3rd ed., p.257) 
proved that the number of inversions (and hence also the major
index) is asymptotically normal. 
That is if we let $Y_n$ be the centralized 
and normalized random variable $Y_n = \dfrac{inv - \mu_n}{\sigma_n},$
then $Y_n \to N$, as $n \to \infty$, 
in distribution, where $N$ is the Gaussian distribution whose probability
density function is $\dfrac{e^{\frac{-x^2}{2}}}{\sqrt{2\pi}}$.

Moreover, in 2010, Baxter and Zeilberger, \cite{Z6}, showed that
the inversion numbers and the major index are
asymptotically \textbf{joint-independently-normal.} 
This is an impressive result. 

In this section, we show the asymptotic normal 
distribution of inversion numbers
(also number of major index). We follow the
method in \cite{Z5}. 
The outline of the method is similar to Feller. 
We show that 
 $ Y_n := \dfrac{X_n-n(n-1)/4}{\sqrt{n(n-1)(2n+5)/72}} \sim N(0,1)$ as $n \to \infty$.
Although this time we show by moment calculation method, i.e.
$m_{2r}=\dfrac{(2r)!}{2^rr!}$ and $m_{2r-1}=0$ 
for every $r$, where $m_r = E[Y^r]$.
Note that the odd moment case is obviously seen from symmetry.

\subsection{Asymptotic-Normality of Inversion Numbers }  \label{inv3}
The goal is to find the binomial moment $B_r(n) := E[\binom{X_n-\mu}{r}]$ 
(before we convert it back to $m_r$).
This binomial moment is the coefficient of Taylor series expansion of $G_n(q)$ at 1.
Writing $q=1+z$ then
\begin{align*}  
G_n(1+z) &= \sum_{k=0}^{\infty}p(x=k)(1+z)^k
= \sum_{k=0}^{\infty}p(x=k) \sum_{r=0}^k \binom{k}{r} z^r \\
&=  \sum_{r=0}^{\infty}  z^r  \sum_k \binom{k}{r}  p(x=k)
 = \sum_{r=0}^{\infty} B_r(n)z^r.   
 \end{align*}

Toward that, we consider
\[  \dfrac{G_n(q)}{G_{n-1}(q)} = \dfrac{1}{nq^{(n-1)/2}}\cdot \dfrac{1-q^n}{1-q} .\]

We then define $P(n,z)$ by replacing $q$ with $1+z$ in the above equation:
\[   P(n,z) = \dfrac{1}{n(1+z)^{(n-1)/2}}\cdot \dfrac{1-(1+z)^n}{1-(1+z)} . \]

The Taylor expansion of $P(n,z)$ from Maple looks like
\begin{align*}
P(n,z) &= 1+\frac{(n-1)(n+1)}{24}z^2-\frac{(n-1)(n+1)}{24}z^3 \\
& +\frac{(n-1)(n+1)(n^2+71)}{1920}z^4-\frac{(n-1)(n+1)(n^2+31)}{960}z^5+\dots
\end{align*}

The general formula is not difficult to find:
\[  P(n,z) = \sum_{i=0}^{\infty} p_i(n) z^i, \]
where
\[  p_i(n) = \frac{1}{n}\sum_{s=0}^i (-1)^s
\binom{\frac{n-3}{2}+s}{s}\binom{n}{i+1-s}.  \]

Now consider the recurrence
\[  G_n(1+z) = P(n,z)G_{n-1}(1+z) . \]

By comparing the coefficient of $z^r$ 
on both sides of the equation, we have
\begin{equation} \label{rec4}
B_r(n)-B_r(n-1) = \sum_{s=1}^r p_s(n)B_{r-s}(n-1). 
\end{equation}

With this recurrence we can prove the conjecture 
of leading term of $B_r(n)$ by an induction on $r$. 

We see from the formula that the leading term of $p_i(n)$ is
\[ \mathcal{L}\{p_i(n)\} =  \begin{cases} \displaystyle
\sum_{s=0}^i \dfrac{(-1)^s}{2^ss!(i+1-s)!}n^i  = \dfrac{n^i}{2^i(i+1)!},  & \mbox{ when $i$ is even,}  \\
-\dfrac{i-1}{2^i \cdot i!}n^{i-1},        & \mbox{ when $i$ is odd.}
\end{cases}   \]

We now can prove the leading term of $B_R(n).$
\begin{thm}
\[B_{2r}(n) = \dfrac{n^{3r}}{r!2^{3r}3^{2r}} + \;\ \mbox{ lower order terms ,}\]
\[B_{2r+1}(n) = -\dfrac{n^{3r}}{(r-1)!2^{3r}3^{2r}} + \;\ \mbox{ lower order terms.}\]
\end{thm}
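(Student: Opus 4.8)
The plan is to prove both formulas simultaneously by strong induction on $r$, using the recurrence \eqref{rec4}, namely $B_r(n) - B_r(n-1) = \sum_{s=1}^r p_s(n) B_{r-s}(n-1)$, together with the leading-term formula for $p_i(n)$ already derived. The base case is $r=0$: $B_0(n) = E[\binom{X-\mu}{0}] = 1$, which matches $n^0/(0!\,2^0\,3^0) = 1$; and $B_1(n) = E[X-\mu] = 0$, matching the stated formula $-n^0/((-1)!\cdots)$ under the convention that $1/(-1)! = 0$ (this convention should be flagged explicitly, since it is what makes the odd formula at $r=0$ degenerate correctly). For the inductive step, assume the claimed leading terms hold for all indices $< R$, and determine the leading term of $B_R(n)$ from the recurrence.

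First I would treat the even case $R = 2r$. On the right-hand side of \eqref{rec4}, the summand $p_s(n) B_{2r-s}(n-1)$ has degree (in $n$) equal to $\deg p_s + \deg B_{2r-s}$. Using $\deg B_{2k} = 3k$ and $\deg B_{2k+1} = 3k$ from the induction hypothesis, and $\deg p_s = s$ for $s$ even, $\deg p_s = s-1$ for $s$ odd, one checks that the dominant contribution comes from $s = 2$: this gives degree $2 + \deg B_{2r-2} = 2 + 3(r-1) = 3r-1$, and the key point is that the accumulated difference $B_{2r}(n) - B_{2r}(0) = \sum_{m=1}^n \big(B_{2r}(m) - B_{2r}(m-1)\big)$ raises the degree by one, yielding degree $3r$ as claimed. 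Concretely, summing the dominant term $p_2(m) B_{2r-2}(m-1) \sim \frac{m^2}{2^2 3!} \cdot \frac{m^{3(r-1)}}{(r-1)! 2^{3(r-1)} 3^{2(r-1)}}$ over $m$ from $1$ to $n$ and using $\sum_{m=1}^n m^{3r-1} \sim n^{3r}/(3r)$, I would collect the constant and verify it equals $\frac{1}{r!\,2^{3r}\,3^{2r}}$ — this is the routine but essential arithmetic check (note $3r \cdot (r-1)! \cdot 3^{2(r-1)} \cdot 3! = r! \cdot 3^{2r}$ up to powers of $2$, and $2^2 \cdot 2^{3(r-1)} \cdot (\text{the }2\text{ from }3!) = 2^{3r}$). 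I must also confirm that no other value of $s$, nor the telescoped lower-order terms of the $s=2$ summand, contributes at degree $3r$; since all other $s$ give strictly smaller degree after the summation, this is safe.

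The odd case $R = 2r+1$ runs in parallel: one identifies which single $s$ dominates $\sum_{s=1}^{2r+1} p_s(n) B_{2r+1-s}(n-1)$. Here I expect the dominant term to be $s=1$, for which $\mathcal{L}\{p_1(n)\} = -\frac{0}{2\cdot 1!} n^0 = 0$ — so in fact $p_1$ contributes nothing at leading order, and the competition is between $s=2$ (degree $2 + \deg B_{2r-1} = 2 + 3(r-1) = 3r-1$) and $s=3$ (degree $2 + \deg B_{2r-2} = 2 + 3(r-1) = 3r-1$). Both reach degree $3r-1$, so both survive the summation to contribute at degree $3r$, and the leading coefficient of $B_{2r+1}(n)$ is the sum of two contributions, one from $\mathcal{L}\{p_2\}B_{2r-1}$ and one from $\mathcal{L}\{p_3\}B_{2r-2}$; the negative sign in the final answer comes from $\mathcal{L}\{p_3(n)\} = -\frac{2}{2^3 3!}n^2$ being negative while $B_{2r-1}(n)$ is itself negative by induction. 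Reconciling these two contributions into the single clean coefficient $-\frac{1}{(r-1)!\,2^{3r}\,3^{2r}}$ is the delicate bookkeeping step and, I expect, the main obstacle: one must be careful that $B_{2r-1}$ and $B_{2r-2}$ enter with the right signs and that the two pieces do not cancel but instead combine. (A useful sanity check here is $r=1$: the formula predicts $B_3(n) = -\frac{n^3}{0!\,2^3\,3^2} + \cdots = -\frac{n^3}{72} + \cdots$, which can be verified directly from $G_n(q)$ or from the displayed expansion of $P(n,z)$.) Once both leading coefficients are pinned down and shown to have the claimed degrees, the induction closes.

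Finally, I would remark that the theorem is exactly what feeds the normality argument of Section 3: converting $B_r(n)$ back to $m_r = E[Y_n^r]$ via the standard change of basis between binomial moments and ordinary moments, dividing by $\sigma_n^r = (n(n-1)(2n+5)/72)^{r/2} \sim (n^3/36)^{r/2}$, one finds $m_{2r} \to \frac{(2r)!}{2^r r!}$ (the even leading term $n^{3r}/(r!\,2^{3r}\,3^{2r})$ times $(2r)!$ from the binomial-to-moment conversion, divided by $(n^3/36)^r = n^{3r}/2^{2r}3^{2r}$, gives $(2r)!/(r!\,2^{3r}\,3^{2r}) \cdot 2^{2r} 3^{2r} = (2r)!/(2^r r!)$) and $m_{2r+1} \to 0$ (the odd binomial moment has degree $3r$, one less than the $3r + \tfrac32$ needed to survive division by $\sigma_n^{2r+1}$), matching the Gaussian moments as required.
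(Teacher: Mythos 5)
Your proposal is correct and follows essentially the same route as the paper: induction on $R$ via the recurrence \eqref{rec4}, using the leading terms of $p_i(n)$ to identify $s=2$ as the dominant summand in the even case and $s=2$, $s=3$ (with $p_1$ vanishing) in the odd case, then matching the degree-$(3r-1)$ coefficient of the difference $B_R(n)-B_R(n-1)$ against the claimed degree-$3r$ leading term of $B_R(n)$. Your telescoping-sum formulation $\sum_{m\le n} m^{3r-1}\sim n^{3r}/(3r)$ is just a slightly more explicit way of phrasing the paper's check that the leading terms of the two sides of \eqref{rec4} agree, and your sign analysis in the odd case (both contributions negative, combining as $\frac{1}{(r-2)!}+\frac{1}{(r-1)!}=\frac{r}{(r-1)!}$) is exactly the computation the paper carries out.
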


\begin{proof} We calculate directly  that $B_0(n)=1$ and $B_1(n)=0.$ 
For $R \geq 2$,
we verify the leading terms on both sides of \eqref{rec4}
which in turn equivalent to do induction on $R.$ 

Case  $R=2r$: 

The leading term of the left hand side of \eqref{rec4}:
\[  \mathcal{L}\{ B_{2r}(n)-B_{2r}(n-1)\} = \dfrac{3r}{r!2^{3r}3^{2r}}n^{3r-1}.  \]

The leading term of the right hand side of \eqref{rec4}:
\begin{align*}
 \mathcal{L}\{  \sum_{s=1}^{2r} p_s(n)B_{2r-s}(n-1)\} 
&=  \mathcal{L}\{  p_2(n)B_{2r-2}(n-1)\}  \\
&= \dfrac{n^2}{24}\cdot \dfrac{n^{3r-3}8\cdot9}{(r-1)!2^{3r}3^{2r}} \\
&= \dfrac{n^{3r-1}3}{(r-1)!2^{3r}3^{2r}}. 
\end{align*}

Case  $R = 2r+1$: \\ \\
The leading term of the left hand side of \eqref{rec4}:
\[  \mathcal{L}\{ B_{2r+1}(n)-B_{2r+1}(n-1)\} = -\dfrac{3r}{(r-1)!2^{3r}3^{2r}}n^{3r-1}.  \]

The leading term of the right hand side of \eqref{rec4}:
\begin{align*}
 \mathcal{L}\{  \sum_{s=1}^{2r+1} p_s(n)B_{2r+1-s}(n-1)\} 
&=  \mathcal{L}\{  p_1(n)B_{2r}(n-1) + p_2(n)B_{2r-1}(n-1) + p_3(n)B_{2r-2}(n-1)\}  \\
&= 0-\dfrac{n^2}{24}\cdot \dfrac{n^{3r-3}8\cdot9}{(r-2)!2^{3r}3^{2r}} 
-\dfrac{2n^2}{8\cdot 3!}\cdot \dfrac{n^{3r-3}8\cdot9}{(r-1)!2^{3r}3^{2r}}\\
&= -\dfrac{n^{3r-1}}{(r-1)!2^{3r}3^{2r}}(3(r-1)+3).
\end{align*}
The leading terms on both sides are equal in both cases.
\end{proof}

\begin{rem}
In fact we can show as many leading terms as we'd like i.e.
\[B_{2r}(n) = \dfrac{n^{3r}}{r!2^{3r}3^{2r}}\left[1 
+ \dfrac{3r(31-6r)}{50n} +\mathcal{O}(1/n^2) \right],\]
\[B_{2r+1}(n) = -\dfrac{n^{3r}}{(r-1)!2^{3r}3^{2r}} 
\left[1 + \dfrac{3r(31-6r)}{50n} +\mathcal{O}(1/n^2) \right].\]
We encourage the reader to check the calculation themselves. 
\end{rem}

The last step is to turns the binomial moments $B_r(n)$ 
to the straight moment $M_r(n)$ using the well known identity:
\[   M_r(n) = \sum_{i=0}^r {r \brace i} B_i(n) \cdot i!,   \]
where ${r\brace i}$ is a Stirling number of the second kind
i.e. number of ways to partition $r$ objects 
into $i$ non-empty subsets.

Then the leading term of $M_r(n)$ is
\[   \mathcal{L}\{ M_{2r}(n) \} = \mathcal{L}\{B_{2r}(n) \}\cdot (2r)!,     \]
and
\[   \mathcal{L}\{ M_{2r+1}(n) \} = \mathcal{L}\{B_{2r+1}(n) \}\cdot (2r+1)!
+ \binom{2r+1}{2} \mathcal{L}\{B_{2r}(n)\}\cdot(2r)!.     \]

The simple calculation leads us to next corollary. 
\begin{cor}
\begin{align*}
M_{2r}(n) &= \dfrac{(2r)!}{r!2^{3r}3^{2r}}n^{3r} + \;\ \mbox{ lower order terms ,}\\
M_{2r+1}(n) &=  0 \cdot n^{3r} + \;\ \mbox{ lower order terms.}
\end{align*}
\end{cor}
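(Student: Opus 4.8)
The plan is to derive the leading terms of $M_{2r}(n)$ and $M_{2r+1}(n)$ directly from the Stirling-number identity $M_r(n)=\sum_{i=0}^r {r\brace i} B_i(n)\, i!$ together with the Theorem's formulas for $\mathcal{L}\{B_{2r}(n)\}$ and $\mathcal{L}\{B_{2r+1}(n)\}$. The key observation, which I would establish first, is that the degree in $n$ of $B_i(n)$ grows roughly like $i$: from the Theorem, $\deg B_{2r}(n)=3r$ and $\deg B_{2r+1}(n)=3r$, so $\deg B_i(n)=\lfloor 3i/2\rfloor$ (with the convention that $B_0=1$, $B_1=0$ contribute degrees $0$ and $-\infty$). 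Consequently, in the sum $\sum_i {r\brace i} B_i(n)\, i!$, the term with the \emph{largest} power of $n$ is the one with the largest $i$, namely $i=r$, giving $\deg M_r(n)=\lfloor 3r/2\rfloor$; all terms with $i<r$ are of strictly lower order and may be discarded when extracting $\mathcal{L}$.

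First I would treat the even case $r\mapsto 2r$. Here the dominant term is $i=2r$, so $\mathcal{L}\{M_{2r}(n)\}={2r\brace 2r} \mathcal{L}\{B_{2r}(n)\}\,(2r)! = \mathcal{L}\{B_{2r}(n)\}\,(2r)!$, since ${2r\brace 2r}=1$. Substituting the Theorem's value $\mathcal{L}\{B_{2r}(n)\}=\dfrac{n^{3r}}{r!2^{3r}3^{2r}}$ yields $M_{2r}(n)=\dfrac{(2r)!}{r!2^{3r}3^{2r}}n^{3r}+$ lower order terms, as claimed. Second, for the odd case $r\mapsto 2r+1$: now two terms in the Stirling sum are relevant, since $B_{2r+1}(n)$ and $B_{2r}(n)$ both have degree $3r$ — namely $i=2r+1$ (contributing $\mathcal{L}\{B_{2r+1}(n)\}\,(2r+1)!$) and $i=2r$ (contributing ${2r+1\brace 2r} B_{2r}(n)\,(2r)!$, where ${2r+1\brace 2r}=\binom{2r+1}{2}$). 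All terms $i\le 2r-1$ have degree at most $3(r-1)+\text{something}<3r$ and are negligible. Adding the two relevant leading coefficients, using $\mathcal{L}\{B_{2r+1}(n)\}=-\dfrac{n^{3r}}{(r-1)!2^{3r}3^{2r}}$ and $\mathcal{L}\{B_{2r}(n)\}=\dfrac{n^{3r}}{r!2^{3r}3^{2r}}$, gives the coefficient of $n^{3r}$ as
\[
-\dfrac{(2r+1)!}{(r-1)!2^{3r}3^{2r}}+\binom{2r+1}{2}\dfrac{(2r)!}{r!2^{3r}3^{2r}},
\]
and the main computational step is to check that this vanishes. Writing $\binom{2r+1}{2}=\dfrac{(2r+1)(2r)}{2}=(2r+1)r$, the second term becomes $(2r+1)r\cdot\dfrac{(2r)!}{r!2^{3r}3^{2r}}=\dfrac{(2r+1)!}{(r-1)!2^{3r}3^{2r}}$, which exactly cancels the first term; hence the $n^{3r}$ coefficient of $M_{2r+1}(n)$ is $0$.

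I do not anticipate a serious obstacle here: the argument is essentially bookkeeping with degrees plus the one cancellation identity above, and both ingredients (the Stirling identity and the leading terms of $B_r(n)$) are already in hand. The one point requiring a little care is justifying that no lower-$i$ term in the Stirling sum can match the degree $3r$ — i.e. confirming $\deg B_i(n)<3r$ for all $i<2r$ in the odd case (and $<3r$ for $i<2r$ in the even case), which follows immediately from $\deg B_i(n)=\lfloor 3i/2\rfloor$ as noted above. Once that degree bound is stated, the rest is the explicit cancellation, which I have sketched. This yields both displayed formulas in the Corollary, and in particular $M_{2r+1}(n)=0\cdot n^{3r}+$ lower order terms.
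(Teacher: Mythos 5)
Your proposal is correct and follows essentially the same route as the paper: extract the top-degree contributions of the Stirling-number identity ($i=2r$ in the even case; $i=2r+1$ and $i=2r$ in the odd case) and verify the cancellation $-\frac{(2r+1)!}{(r-1)!}+\binom{2r+1}{2}\frac{(2r)!}{r!}=0$, which is exactly the ``simple calculation'' the paper leaves to the reader. One harmless slip: the degree of $B_i(n)$ is $3\lfloor i/2\rfloor$, not $\lfloor 3i/2\rfloor$ (these differ by $1$ for odd $i$), but your bound still keeps every term with $i<2r$ strictly below degree $3r$, so the argument is unaffected.
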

The coefficient of $n^{3r}$ in $M_{2r+1}(n)$ is 0. 
So we know that the actual leading term has 
degree in $n$ less than $3r.$

It is now easy to verify the claim that 
$m_{2r} := \dfrac{M_{2r}(n)}{M_2(n)^r}$
is $\dfrac{(2r)!}{2^rr!}$ and $m_{2r+1} := \dfrac{M_{2r+1}(n)}{M_2(n)^{r+1/2}}$
is $0$  asymptotically.

\begin{rem} It is possible
to extend the result to as many leading terms of $m_{r}$ as we'd like,  i.e.
\begin{align*}
m_{2r} &= \dfrac{(2r)!}{2^rr!}\big(  
1-\dfrac{9r(r-1)}{25n}+\dfrac{9r(r-1)(441r^2-205r-129)}{61250n^2} \\
&-\dfrac{3r(r-1)(7938r^4-3132r^3-27252r^2-20202r+2221835)}{3062500n^3}+...\big), \\
m_{2r+1} &= 0. 
\end{align*}
\end{rem}

\subsection{Asymptotic-Normality of Major Index}

Asymptotic-Normality of Major Index already follows 
from those of inversion numbers. However this time we 
want to show it by starting with recursive relations right away 
rather than the known generating function. 
This recursive relation method is from the 
awesome paper \cite{Z6} mentioned earlier. 

\textbf{Generating Function}

Define generating function of major index:
\[  H_n(q) := \sum_{\pi \in S_n} q^{maj(\pi)}.  \]

The recurrence relation comes from considering
the last 2 elements in the permutation.
\[  maj(\pi''ji) = \begin{cases}
maj(\pi''j)    & \mbox{if } j < i; \\
maj(\pi''j) +n-1   & \mbox{if } j > i.
\end{cases} \]

So in order to compute $H_n(q)$, we introduce the
more general counting function of permutations (in $S_n$)
that end with an $i.$ Let's call it $F(n,i)(q)$.
\[  F(n,i)(q) := \sum_{\substack{\pi \in S_n \\ \pi(n)=i}} q^{maj(\pi)}.  \]
It follows that (dropping the argument $q$):
\[  F(n,i) = \sum_{j=1}^{i-1}F(n-1,j)+q^{n-1}\sum_{j=i}^{n-1}F(n-1,j),  \]
with $F(1,1)=1.$ Note that, after chopping $i$, 
the index of the second sum has been rearranged.

Replacing $i$ by $i+1$ in the above equation, we have:
\[  F(n,i+1) = \sum_{j=1}^{i}F(n-1,j)+q^{n-1}\sum_{j=i+1}^{n-1}F(n-1,j).  \]

Then subtract the former with the latter to get
\begin{align} \label{F1}
F(n,i) - F(n,i+1) &= -F(n-1,i)+q^{n-1}F(n-1,i) \nonumber \\
&= -(1-q^{n-1})F(n-1,i)     \;\ \;\ \;\ \mbox{ for } 1\leq i < n.
\end{align}
Another important relation,
\begin{equation}  \label{F2}
F(n,n) = \sum_{j=1}^{n-1} F(n-1,j).
\end{equation}

Finally, the connection between $H_n$ and $F(n,i)$ is 
\[H_n(q) = F(n+1,n+1)(q).\]

\textbf{Centralized Probability Generating Function}  

Consider the centralized probability generating function:
\[  G(n,i)(q) := \dfrac{F(n,i)(q)}{(n-1)!q^{n-i+(n-1)(n-2)/4}}. \]

The recurrences \eqref{F1} and \eqref{F2} become:
\begin{equation} \label{G1}
G(n,i) = \frac{1}{q}G(n,i+1) +\frac{q^{n-1}-1}{q^{n/2}(n-1)}G(n-1,i)   
 \;\ \;\ \;\ \mbox{ for } 1\leq i < n,
\end{equation}
and
\begin{equation}  \label{G2}
G(n,n) = \frac{1}{n-1}\sum_{j=1}^{n-1} q^{n/2-j}G(n-1,j).  
\end{equation}

\textbf{Guessing the Binomial Moments}

The equations \eqref{G1} and \eqref{G2} help us to
crank out $G(n,i)(q)$ very quickly which will help us
to conjecture the binomial moment $B_r(n,i)$ through
the Taylor series:
\[   G(n,i)(1+z) = \sum_{r=0}^{\infty}B_r(n,i)z^r.   \] 

The conjectures that we made here are similar to what we had before, i.e.
for $1 \leq i \leq n,$
\[  B_{2r}(n,i) = \dfrac{1}{2^rr!}\left(\dfrac{n^3}{36}\right)^r +
\text{ lower-order-term-in-}n. \]
\[  B_{2r+1}(n,i) = -\dfrac{r}{2^rr!}\left(\dfrac{n^3}{36}\right)^r +
\text{ lower-order-term-in-}n. \]

\textbf{Guessing is nice and all. But how to prove?}

You can make a recurrence out of \eqref{G1} and \eqref{G2}, i.e. 
comparing coefficient of $z^r$ on both sides:
\begin{align}  
  B_{r}(n,i) &=  B_{r}(n,i+1) -B_{r-1}(n,i+1) +B_{r-1}(n-1,i) 
   +B_{r-2}(n,i+1)   \nonumber \\
 & -B_{r-2}(n-1,i) -B_{r-3}(n,i+1) 
 +\left(\frac{n^2}{24} -\frac{n}{12}+1 \right)B_{r-3}(n-1,i)  \nonumber \\  
 & +B_{r-4}(n,i+1)
 +\left(\frac{-n^2}{12} +\frac{n}{6}-1 \right)B_{r-4}(n-1,i)  \nonumber \\  
 &+\text{ lower-order-term-in-}n, \;\ \;\  1\leq i < n ,\\
 B_r(n,n) &= \dfrac{1}{n-1}  \sum_{j=1}^{n-1} \big[  B_r(n-1,j) 
+   \left(\frac{n}{2}-j\right) B_{r-1}(n-1,j)  \nonumber \\
&+  \binom{\dfrac{n}{2}-j}{2} B_{r-2}(n-1,j)
 + \binom{\dfrac{n}{2}-j}{3} B_{r-3}(n-1,j)    \nonumber \\
& +\text{ lower-order-term-in-}n  \big].
\end{align}

It is possible to show that $G(n,i)(q)$ are the same for all $1 \leq i \leq n$.
As a result $B_r(n,i)=B_r(n,j)$ for all $1 \leq i,j \leq n$.
We simplify (6) and (7) using this claim. 
\begin{align}  
 B_{r}(n,i) - B_{r}(n-1,i) &=  
   B_{r-1}(n,i) -B_{r-1}(n-1,i)  \nonumber \\
 &  -B_{r-2}(n,i) 
 +\left(\frac{n^2}{24} -\frac{n}{12}+1 \right)B_{r-2}(n-1,i)  \nonumber \\  
 & +B_{r-3}(n,i)
 +\left(\frac{-n^2}{12} +\frac{n}{6}-1 \right)B_{r-3}(n-1,i)  \nonumber \\  
 &+\text{ lower-order-term-in-}n, \;\ \;\  1\leq i < n ,\\
 B_r(n,n) - B_r(n-1,j)  &=  \dfrac{n(n-2)}{24}B_{r-2}(n-1,j) 
 - \dfrac{n(n-2)}{24}B_{r-3}(n-1,j) \nonumber \\
&  +\text{ lower-order-term-in-}n  .
\end{align}

We note that  $\displaystyle \sum_{j=1}^{n-1} \left(\frac{n}{2}-j\right) = 0,$
 $\displaystyle \sum_{j=1}^{n-1} \binom{\frac{n}{2}-j}{2}  = \dfrac{n(n-1)(n-2)}{24}$
 and  $\displaystyle \sum_{j=1}^{n-1} \binom{\frac{n}{2}-j}{3}  = -\dfrac{n(n-1)(n-2)}{24}.$

We could prove the conjectures by 
applying either (8) or (9). 
We choose to show by (8).

\begin{thm} \label{Three} For $1 \leq i \leq n,$
\[  B_{2r}(n,i) = \dfrac{1}{2^rr!}\left(\dfrac{n^3}{36}\right)^r +
\text{ lower-order-term-in-}n. \]
\[  B_{2r+1}(n,i) = -\dfrac{r}{2^rr!}\left(\dfrac{n^3}{36}\right)^r +
\text{ lower-order-term-in-}n. \]
\end{thm}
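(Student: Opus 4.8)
The plan is to prove both formulas simultaneously by induction on $r$, mirroring the argument of Theorem 1 but now carrying the extra index $i$. The base cases $r=0$ and $r=1$ are handled directly: since $G(n,i)(q)$ is a probability generating function centered at the mean, $B_0(n,i)=1$ and $B_1(n,i)=0$, which matches the claimed formulas (the $r=0$ even case gives $1$, the $r=0$ odd case $B_1$ gives $0$). For the inductive step I would fix $R\ge 2$, assume the leading-term formulas hold for all smaller indices, and verify that the recurrence \eqref{rec4}-analogue (8) is consistent with the conjectured leading term of $B_R(n,i)$. Because the claim asserts $B_r(n,i)$ is independent of $i$ to leading order, I would first invoke the stated fact that $G(n,i)(q)$ is actually the same for all $1\le i\le n$, so that $B_r(n,i)=B_r(n,i)$ genuinely depends only on $n$; this is what licenses the passage from (6)--(7) to the simplified (8)--(9), and it is the one structural input I would lean on.

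The core computation is then to match leading terms in (8). Write $R=2r$ first. The left side $B_{2r}(n,i)-B_{2r}(n-1,i)$ has leading term obtained by differencing $\frac{1}{2^rr!}(n^3/36)^r$, namely $\frac{1}{2^rr!}\cdot\frac{3r}{36^r}n^{3r-1} = \frac{3r}{2^rr!\,36^r}n^{3r-1}$. On the right side, each term $B_{r-1}(n,i)-B_{r-1}(n-1,i)$ is of lower order (it loses a power of $n$ relative to $B_{r-1}$, which already has degree $3(r-1)/2$-ish), the $B_{r-3}$ terms contribute through the $n^2/24$ and $-n^2/12$ coefficients, but the dominant contribution comes from the pair $-B_{2r-2}(n,i)+(\frac{n^2}{24}-\frac{n}{12}+1)B_{2r-2}(n-1,i)$, i.e. effectively $\frac{n^2}{24}B_{2r-2}(n-1,i)$ to leading order, which equals $\frac{n^2}{24}\cdot\frac{1}{2^{r-1}(r-1)!}\cdot\frac{n^{3(r-1)}}{36^{r-1}}$. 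One checks $\frac{n^2}{24}\cdot\frac{36}{24}\cdot\frac{1}{r}\cdot\frac{n^{3r-3}}{n^{3r-1}}$-bookkeeping reduces this to exactly $\frac{3r}{2^rr!\,36^r}n^{3r-1}$, matching the left side. For $R=2r+1$ the same scheme applies: the left side difference of $-\frac{r}{2^rr!}(n^3/36)^r$ produces $-\frac{3r\cdot r}{2^rr!\,36^r}n^{3r-1}$-type terms, while on the right the $B_{2r}$ contribution (through the $+B_{r-3}$-type term with the $-n^2/12$ coefficient) and the $B_{2r-1}$ contribution combine, and the arithmetic of $3(r-1)+3$ that appeared in Theorem 1 reappears here in the guise of the coefficient $-\frac{r}{2^rr!}$; I would verify the two sides coincide by the same elementary algebra.

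I expect the main obstacle to be bookkeeping rather than anything conceptual: one must be careful that all the terms I am calling ``lower-order-in-$n$'' genuinely are of lower order, which requires tracking the $n$-degree of $B_s(n,i)$ (namely $\lceil 3s/2\rceil$, with the odd-case degree drop) through every term of (8), including the contributions of the $\frac{n^2}{24}$, $-\frac{n}{12}$, $\frac{n}{6}$, and constant pieces, and confirming that no cancellation on the left pushes the true leading order below what the formula predicts. A secondary subtlety is the boundary relation (9) for $i=n$: although we chose to argue via (8), one should check that (8) is valid for the range $1\le i<n$ we actually use and that the $i$-independence claim is not circular — i.e. that it can be established independently of the moment formulas (for instance directly from \eqref{G1}--\eqref{G2} by an induction on $n$). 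Once the $i$-independence is in hand, the rest is a finite verification of leading coefficients, entirely parallel to the proof of Theorem 1.
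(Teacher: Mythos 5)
Your proposal follows essentially the same route as the paper: induction on $R$ with base cases $B_0(n,i)=1$, $B_1(n,i)=0$, matching leading terms on both sides of the simplified recurrence (8) after invoking the $i$-independence of $G(n,i)(q)$, with the dominant right-hand contribution being $\frac{n^2}{24}B_{2r-2}(n-1,i)$ in the even case and the combination of the $B_{2r}$-difference, $\frac{n^2}{24}B_{2r-1}$, and $-\frac{n^2}{12}B_{2r-2}$ terms in the odd case, exactly as in the paper. The only blemish is a misattribution in your odd-case sketch (the $-\frac{n^2}{12}$ coefficient multiplies $B_{2r-2}$, while $B_{2r}$ enters through the plain difference term), but the coefficients you arrive at agree with the paper's computation.
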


\begin{proof} We calculate directly  that $B_0(n,i)=1$ and $B_1(n,i)=0.$ 
For $R \geq 2$,
we verify the leading terms on both sides of  (8)
which in turn equivalent to do induction on $R.$ 

Case  $R=2r$: 
The leading term of the left hand side of (8):
\[  \mathcal{L}\{ B_{2r}(n,i) - B_{2r}(n-1,i)\} = \dfrac{3r}{r!72^{r}}n^{3r-1}.  \]

The leading term of the right hand side of (8):
\begin{align*}
 \mathcal{L}\{ \dfrac{n^2}{24}B_{2r-2}(n-1,i) \} 
&= \dfrac{n^2}{24}\cdot \dfrac{n^{3r-3}}{(r-1)!72^{r-1}} 
= \dfrac{3r \cdot n^{3r-1}}{r!72^{r}}. 
\end{align*}

Case  $R=2r+1$: 
The leading term of the left hand side of (8):
\[  \mathcal{L}\{ B_{2r+1}(n,i) - B_{2r+1}(n-1,i)\} = \dfrac{-3r^2}{r!72^{r}}n^{3r-1}.  \]

The leading term of the right hand side of (8):
\begin{align*}
 \mathcal{L}\{ RHS \} 
&=  \dfrac{3r\cdot n^{3r-1}}{r!72^r}
-\dfrac{n^2}{24}\cdot \dfrac{(r-1)n^{3r-3}}{(r-1)!72^{r-1}} 
-\dfrac{n^2}{12}\cdot \dfrac{n^{3r-3}}{(r-1)!72^{r-1}} 
= \dfrac{-3r^2 \cdot n^{3r-1}}{r!72^{r}}. 
\end{align*}
\end{proof}

Lastly we define $\displaystyle G_n(1+z) := \sum_{i=1}^n G(n,i)(1+z) = G(n+1,n+1)(1+z).$
The leading term from theorem \ref{Three} implies 
the leading term of $B_r(n)$ of 
$G_n(1+z) = \sum_{r=0}^{\infty}B_r(n)z^n$ since
we can see that $B_r(n) = B_r(n+1,n+1).$
From this point on, the normality distribution of 
the random variable of major index
on permutation of length $n$
can be shown the same way as in the last part of 
section \ref{inv3} on inversion numbers.

\section {Boolean Functions}

\textbf{Boolean function} is function that
sends each element in the \textit{Boolean domain} 
$B^n := \{0,1\}^n$ to $\{0,1\}.$

For example, a boolean function with $n=3$ is 
shown in the form of the truth table, 
\begin{center}
\begin{tabular}{ l |c } 
  000 & 0  \\
  001 & 1  \\
  010 & 0  \\
  011 & 1  \\
  100 & 1  \\
  101 & 1  \\
  110 & 0  \\
  111 & 1  \\
\end{tabular}
\end{center}

The boolean function in a ``disjunctive normal form'' (DNF)  
that represents this table is
\[   \bar{x}_1\bar{x}_2x_3 \lor \bar{x}_1x_2x_3 \lor x_1\bar{x}_2\bar{x}_3
\lor x_1\bar{x}_2x_3 \lor x_1x_2x_3.\]

With the length $n$ is 3, 
the possible number of terms is $2^3=8$
and the possible number of boolean functions is $2^{2^3} = 2^8 = 256.$
In general, there are $2^{2^n}$ possibilities of boolean function of length $n$. 

We write the boolean function by the set of elements that map to 1.
In the example above, $f =\{001,011,100,101,111\}$ .
From this point of view, the boolean function is a set of 
vertices in a $n$-dimensional unit cube. 
From this point onward, we will look at the boolean 
function from this angle and denote the set of 
vertices as $f$.

\subsection{Asymptotic Normality of Number of Elements in $f$}

The result from the name of this section might sounds obvious.
The distribution is just a binomial. But we will calculate the moments
and hopefully obtain some formulas and identities along the way.

Let $\mathcal{B}$ be a sample space of all $2^{2^n}$
boolean functions.  Let $X := X_0(f)$ be 
a random number of 0-cube (0-dimension subspace), 
i.e. number of elements that are contained in the set $f$ 
of boolean function.
Of course,  $0 \leq X \leq 2^n.$ We want to work toward the 
asymptotic normality of $X.$ Basically, we want to show that
 $ Y_n := \dfrac{X_n-\mu}{\sigma} \sim N(0,1)$ as $n \to \infty$.
We actually show the even moment of $Y_n: \;\ m_{2r}=\dfrac{(2r)!}{2^rr!}$ 
and the odd moment of $Y_n: \;\ m_{2r-1}=0$ for every $r$.
But the odd moment case is obviously seen from symmetry.
Therefore we only show that asymptotically 
$\dfrac{E[(X-\mu)^{2r}]}{\sigma^{2r}} = \dfrac{(2r)!}{2^rr!}. $

\textbf{The Generating Function Method}

Define the generating function $F_n(q)$ by $F_n(q) = \sum_{k=0}^{\infty}  a_k  q^k,  $
where $k$ is the number of elements in the set $f$ and 
$a_k$ is the number of ways to have $k$ elements with each
elements have length $n.$ The formula for $F_n(q)$ is 
(each element has 1/2 chance to be in $f$ and 1/2 chance to be out of $f$),
\[ F_n(q) = \left(\dfrac{1+q}{2}\right)^{2^n}.\]

The generating function about the mean $G_n(q)$ is
\[ G_n(q) =   \left(\dfrac{1+q}{2}\right)^{2^n}\dfrac{1}{q^{2^{n-1}}}. \]
This binomial moment is the coefficient of Taylor series expansion of $G_n(q)$ at 1.
Writing $q=1+z$, we have
\[  G_n(1+z) = \sum_{r=0}^{\infty} B_r(n)z^r.   \]
Toward that, we consider
\[  \dfrac{G_n(q)}{G_{n-1}(q)} =  \left[\dfrac{1+q}{2\sqrt{q}}\right]^{2^{n-1}} .\]

We then define $P(n,z)$ by replacing $q$ with $1+z$ in the above equation:
\begin{equation} \label{MyP}
  P(n,z)  := \dfrac{G_n(1+z)}{G_{n-1}(1+z)}  =  \left[\dfrac{2+z}{2\sqrt{1+z}}\right]^{2^{n-1}} . 
\end{equation}

The Taylor expansion of $P(n,z)$ from Maple looks like
\begin{align*}
P(n,z) &= 1+\frac{2^n}{16}z^2-\frac{2^n}{16}z^3 
+ \left(\frac{4^n}{512}+\frac{7 \cdot 2^n}{128} \right)z^4 \\
& -\left(\frac{4^n}{256}+\frac{3 \cdot 2^n}{64} \right)z^5+\dots
\end{align*}

Let the expansion of $P(n,z)$ be
\[  P(n,z) = \sum_{i=0}^{\infty} p_i(n) z^i.\]
It can be shown from \eqref{MyP} that the leading term of $p_i(n)$ is
\begin{align*}
\mathcal{L}\{p_{2r}(n)\} &= \dfrac{2^{rn}}{2^{4r}r!}, \\
\mathcal{L}\{p_{2r+1}(n)\} &= \dfrac{-2^{rn}}{2^{4r}(r-1)!}.
\end{align*}

Now consider the recurrence
\begin{equation} \label{eqG}
  G_n(1+z) = P(n,z)G_{n-1}(1+z) . 
\end{equation}

By comparing the coefficient of $z^r$ in \eqref{eqG} 
on both sides of the equation, we have
\begin{equation} \label{rec}
B_r(n) = \sum_{s=0}^r p_s(n)B_{r-s}(n-1). 
\end{equation}

With this recurrence, 
we can prove the leading term of $B_R(n)$, 
by induction on $R$.
\begin{thm}
\[B_{2r}(n) = \dfrac{2^{rn}}{r! 8^r} + \;\ \mbox{ lower order terms in $2^n$,}\]
\[B_{2r+1}(n) = \dfrac{-2^{rn}}{(r-1)!8^r} + \;\ \mbox{ lower order terms in $2^n$.}\]
\end{thm}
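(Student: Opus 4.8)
The plan is to mirror exactly the argument used for the inversion-number case (Theorem~1), only now working with the recurrence \eqref{rec} in which the convolution runs over \emph{all} of $p_0, p_1, \dots, p_r$ rather than over increments. First I would record the base cases: from $G_n(1+z)=\left(\frac{2+z}{2\sqrt{1+z}}\right)^{2^n}$ one reads off $B_0(n)=G_n(1)=1$ and, since the mean has already been centralized out, $B_1(n)=0$. These match the claimed formulas at $r=0$ (with the convention that $1/(r-1)!=0$ when $r=0$, so the odd leading term vanishes, consistent with $B_1(n)=0$). The induction is then on $R$, assuming the stated leading terms for all $B_{R'}(n-1)$ with $R'<R$ and for $B_R(n-1)$ itself, and deducing the leading term of $B_R(n)$.

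The key observation is that in \eqref{rec} the $s=0$ term is $p_0(n)B_r(n-1)=B_r(n-1)$, so the recurrence rearranges to
\[ B_r(n)-B_r(n-1)=\sum_{s=1}^{r}p_s(n)B_{r-s}(n-1), \]
which is formally identical to \eqref{rec4}. Now I would substitute the known leading orders. Writing $N:=2^n$ as the natural size parameter, we have $B_{2r}(n-1)\sim \frac{N^r}{2^{4r}\,r!\,8^{\,?}}$ — more precisely I must be careful that the leading term of $B_{2k}$ has order $N^{k}$ in $2^n$, and $\mathcal L\{p_{2j}(n)\}$ has order $N^{j}$, while $\mathcal L\{p_{2j+1}(n)\}$ has order $N^{j}$ as well (degree drop by one, just as before). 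Hence on the right-hand side the product $p_s(n)B_{r-s}(n-1)$ contributes order $N^{\lfloor s/2\rfloor + \lfloor (r-s)/2\rfloor}$, and this is maximized — equal to $N^{\lfloor r/2\rfloor}$ — only for the smallest admissible even $s$, namely $s=2$ (with the odd indices $s=1,3$ contributing the same order only in the odd-$R$ case, exactly as in Theorem~1). So the dominant term of the right side is $\mathcal L\{p_2(n)B_{R-2}(n-1)\}$ plus, when $R$ is odd, $\mathcal L\{p_1(n)B_{R-1}(n-1)+p_3(n)B_{R-2}(n-1)\}$.

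From here it is bookkeeping: using $\mathcal L\{p_2(n)\}=\frac{2^n}{16}$, one gets in the even case $R=2r$ that the right side has leading term $\frac{2^n}{16}\cdot\frac{2^{(r-1)n}}{8^{\,r-1}(r-1)!}\,(\text{degree factor})$, and I match this against $\mathcal L\{B_{2r}(n)-B_{2r}(n-1)\}$, which is $r$ times the $n\to n$ difference of $\frac{2^{rn}}{8^r r!}$ — note $2^{rn}-2^{r(n-1)}=2^{rn}(1-2^{-r})$, so the ``difference'' here is a multiplicative factor $(1-2^{-r})$ rather than the polynomial-derivative factor $3r$ that appeared over $n$. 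This is the one genuine subtlety: because the size parameter is $2^n$, not $n$, the left-hand difference $B_R(n)-B_R(n-1)$ is not $O(N^{\deg-1})$ but is itself of full order $N^{\deg}$ with coefficient scaled by $(1-2^{-\cdot})$. I expect this mismatch — reconciling the geometric-difference factor on the left with the powers of $2$ coming out of $p_2,p_1,p_3$ on the right — to be the main obstacle, and checking it is exactly the computation the theorem's proof will carry out; the odd case $R=2r+1$ goes the same way, with the extra $p_1,p_3$ contributions combining to produce the stated coefficient $-\frac{1}{(r-1)!8^r}$. Once both cases balance, the induction closes and the theorem follows.
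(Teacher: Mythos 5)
There is a genuine gap, and it is precisely at the point you flag as ``the one genuine subtlety.'' Your plan transplants the structure of the inversion-number proof: subtract the $s=0$ term to get $B_r(n)-B_r(n-1)=\sum_{s=1}^r p_s(n)B_{r-s}(n-1)$ and then keep only $s=2$ (plus $s=1,3$ in the odd case) as the dominant contribution. That selection is wrong here. In the inversion case the size parameter is polynomial in $n$, so the order of $p_{2t}B_{2(r-t)}$ is $n^{2t}\cdot n^{3(r-t)}=n^{3r-t}$ and genuinely decreases with $t$. In the Boolean case the size parameter is $2^n$, and $\mathcal{L}\{p_{2t}(n)\}\sim 2^{tn}$ while $\mathcal{L}\{B_{2(r-t)}(n-1)\}\sim 2^{(r-t)(n-1)}$, so \emph{every} product $p_{2t}(n)B_{2(r-t)}(n-1)$, $t=0,1,\dots,r$, has the full order $2^{rn}$: the exponents add to $r$ no matter how the index is split. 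Concretely, keeping only $s=2$ gives a right-hand side of leading coefficient $\frac{2^{rn}}{16^r(r-1)!}=\frac{r\cdot 2^{rn}}{16^r r!}$, whereas the left-hand side $B_{2r}(n)-B_{2r}(n-1)$ has leading coefficient $\frac{2^{rn}(2^r-1)}{16^r r!}$; these agree only for $r=1$. The mismatch you anticipated is not a bookkeeping issue to be reconciled --- it is the signal that the truncation to $s=2$ discards terms of leading order.

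The repair, which is what the paper actually does, is to not single out any term: evaluate the whole convolution \eqref{rec} at leading order. For $R=2r$ only the even indices survive at top order and
\[ \mathcal{L}\Bigl\{\sum_{t=0}^r p_{2t}(n)B_{2(r-t)}(n-1)\Bigr\}=\frac{2^{rn}}{16^r}\sum_{t=0}^r\frac{1}{t!(r-t)!}=\frac{2^{rn}}{16^r}\cdot\frac{2^r}{r!}=\frac{2^{rn}}{8^r r!}, \]
the Vandermonde-type identity $\sum_t \binom{r}{t}=2^r$ doing the work that the single term $p_2B_{2r-2}$ did in the polynomial setting. The odd case is analogous: the two interleaved sums $\sum_t p_{2t}B_{2(r-t)+1}$ and $\sum_t p_{2t+1}B_{2(r-t)}$ each contribute at order $2^{rn}$ and combine via the same binomial identity to give $\frac{-2^{rn}}{8^r(r-1)!}$. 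Your base cases, the induction scheme, and the observation about the geometric (rather than derivative-like) behavior of $B_R(n)-B_R(n-1)$ are all fine; the missing idea is that the exponential scale forces you to sum the entire convolution rather than isolate its lowest-index term.
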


\begin{proof} We calculate directly  that $B_0(n)=1$ and $B_1(n)=0.$ 
For $R \geq 2$, we verify the leading terms on 
the right hand sides of \eqref{rec}. 

Case  $R=2r$: 
\begin{align*}
 \mathcal{L}\{  \sum_{s=0}^{2r} p_s(n)B_{2r-s}(n-1)\} 
&=  \mathcal{L}\{   \sum_{t=0}^r p_{2t}(n)B_{2(r-t)}(n-1) \}  \\
&= \sum_{t=0}^r  \dfrac{2^{tn}}{16^tt!} \cdot \dfrac{2^{(r-t)(n-1)}}{(r-t)!8^{r-t}} \\
&= \dfrac{2^{rn}}{16^r}  \sum_{t=0}^r  \dfrac{1}{t!(r-t)!} 
= \dfrac{2^{rn}}{16^r}\dfrac{2^r}{r!} . 
\end{align*}

Case  $R = 2r+1$: 
\begin{align*}
 \mathcal{L}\{  \sum_{s=0}^{2r+1} p_s(n)B_{2r+1-s}(n-1)\} 
&=  \mathcal{L}\{   \sum_{t=0}^r p_{2t}(n)B_{2(r-t)+1}(n-1) 
+  \sum_{t=0}^r p_{2t+1}(n)B_{2(r-t)}(n-1)   \}  \\
&= \sum_{t=0}^r  \dfrac{2^{tn}}{16^tt!} \cdot \dfrac{-2^{(r-t)(n-1)}}{(r-1-t)!8^{r-t}} 
+\sum_{t=0}^r  \dfrac{-2^{tn}}{16^t(t-1)!} \cdot \dfrac{2^{(r-t)(n-1)}}{(r-t)!8^{r-t}} \\
&= \dfrac{-2^{rn}}{16^r}  \sum_{t=0}^r  
\left(  \dfrac{1}{t!(r-1-t)!}+  \dfrac{1}{(t-1)!(r-t)!} \right)  = \dfrac{-2^{rn}}{16^r}\dfrac{2^r}{(r-1)!}.
\end{align*}
\end{proof}

Once these have been verified, we turns the binomial moments $B_r(n)$ 
to the straight moment $M_r(n):= E[(X-\mu)^r]$, similar to section 4.1, 
using the well known identity:
\[   M_r(n) = \sum_{i=0}^r {r \brace i} B_i(n) \cdot i!,   \]

The simple calculation leads us to next corollary. 
\begin{cor} \label{Five}
\begin{align*}
M_{2r}(n) &= \dfrac{(2r)!2^{rn}}{r!8^r} + \;\ \mbox{ lower order terms in $2^n$,}\\
M_{2r+1}(n) &=  C(r) \cdot 2^{rn} + \;\ \mbox{ lower order terms in $2^n$,}
\end{align*}
\end{cor}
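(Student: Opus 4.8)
The plan is to push the leading-term formulas for $B_r(n)$ from the preceding theorem through the conversion identity
\[ M_r(n)=\sum_{i=0}^{r}{r\brace i}\,B_i(n)\,i!, \]
keeping only the summands that attain the top power of $2^n$.

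The one preliminary needed is a crude order bound: $B_i(n)$ is a polynomial in $2^n$ of degree at most $\lf i/2\rf$. This follows by induction on $i$ after telescoping \eqref{rec} in $n$, using that the leading-term formulas recorded above give $\deg_{2^n}p_s(n)=\lf s/2\rf$, that $\lf s/2\rf+\lf (i-s)/2\rf\le\lf i/2\rf$, and that a finite sum $\sum_m 2^{km}$ is itself a polynomial in $2^n$ of degree $k$; the base cases $B_0(n)=1$ and $B_1(n)=0$ are immediate. Since every Stirling number ${r\brace i}$ is independent of $n$ and so cannot raise the order of a summand, in the sum for $M_R(n)$ the terms reaching order $2^{\lf R/2\rf n}$ are exactly those with $\lf i/2\rf=\lf R/2\rf$, namely $i=2r$ when $R=2r$ and $i\in\{2r,2r+1\}$ when $R=2r+1$; every other $i$ contributes strictly lower order in $2^n$.

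For $R=2r$ only $i=2r$ survives and ${2r\brace 2r}=1$, so $M_{2r}(n)=(2r)!\,B_{2r}(n)+(\text{lower order in }2^n)=\dfrac{(2r)!\,2^{rn}}{r!\,8^r}+(\text{lower order})$, which is the first assertion. For $R=2r+1$ the surviving indices are $2r+1$ and $2r$; with ${2r+1\brace 2r+1}=1$, ${2r+1\brace 2r}=\binom{2r+1}{2}=r(2r+1)$, and the leading coefficients of $B_{2r+1}(n)$ and $B_{2r}(n)$ from the theorem,
\[ C(r)\;=\;-\frac{(2r+1)!}{(r-1)!\,8^r}\;+\;r(2r+1)\,(2r)!\cdot\frac{1}{r!\,8^r}. \]
Since $r(2r+1)\,(2r)!/r!=(2r+1)(2r)!/(r-1)!=(2r+1)!/(r-1)!$, the two terms cancel and $C(r)=0$, in agreement with the exact symmetry of $X\sim\mathrm{Binomial}(2^n,\tfrac12)$ about its mean, for which every odd central moment vanishes identically.

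The only genuine obstacle is the order bookkeeping that justifies the truncation --- verifying $\deg_{2^n}B_i(n)\le\lf i/2\rf$ for all $i$ so that every index with $\lf i/2\rf<\lf R/2\rf$ may be dropped into the error term. Once that bound is established, reading off $M_{2r}(n)$ and $C(r)$ from the theorem is a one-line computation.
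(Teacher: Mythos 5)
Your proposal is correct and follows the same route the paper intends by ``the simple calculation'': push the leading terms of $B_i(n)$ from the preceding theorem through $M_r(n)=\sum_i {r\brace i}B_i(n)\,i!$, with only $i=2r$ surviving in the even case and $i\in\{2r,2r+1\}$ in the odd case (exactly the two-term formula the paper writes out in the analogous inversion-number section). You go slightly further than the paper in two harmless ways --- making the degree bound $\deg_{2^n}B_i(n)\le\lf i/2\rf$ explicit to justify the truncation, and evaluating $C(r)=0$, which the paper leaves unspecified but which matches its later observation that all odd central moments of this binomial vanish identically.
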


It is now easy to verify the condition for normality that 
$m_{2r} = \dfrac{M_{2r}(n)}{M_2(n)^r} = 
\dfrac{(2r)!}{2^rr!}$ and $m_{2r+1} = \dfrac{M_{2r+1}(n)}{M_2(n)^{r+1/2}}
= 0$  as $n \to \infty$.


\subsubsection{Moment Calculations}
In this subsection, we apply the overlapping stage approach
to calculate $E[X^r]$ and relate it to the moment about 
the mean $E[(X-\mu)^r]$.

First we calculate the straight moment $E[X^r], \;\ r \geq 0.$
The first moment is easy.
\[ E[X] = \dfrac{1}{2}\binom{2^n}{1} = 2^{n-1}.\]
For the second moment,
\[ E[X^2] = \sum_{S_1}E[X_{S_1}^2] + \sum_{S_1 \neq S_2}E[X_{S_1}X_{S_2}]
= \dfrac{1}{2}\binom{2^n}{1}+2!\dfrac{1}{2^2}\binom{2^n}{2} =\dfrac{2^{n}}{4}(2^n+1).\]
For the third moment,
\[ E[X^3] = \dfrac{1}{2}\binom{2^n}{1}+(3)2!\dfrac{1}{2^2}\binom{2^n}{2} 
+3!\dfrac{1}{2^3}\binom{2^n}{3} = 2^{2n-3}(2^n+3).\]
For the fourth moment,
\[ E[X^4] = \dfrac{1}{2}\binom{2^n}{1}+(7)2!\dfrac{1}{2^2}\binom{2^n}{2} 
+(6)3!\dfrac{1}{2^3}\binom{2^n}{3} 
+4!\dfrac{1}{2^4}\binom{2^n}{4}= 2^{n-4}(2^{2n}+5\cdot2^n-2)(2^n+1).\]

These moments are calculated based on how they overlap with each other.
In general, the $r^{th}$ moment is
\begin{equation} \label{point} 
E[X^r] = \sum_{i=0}^r  {r\brace i} \dfrac{i!}{2^i} \binom{2^n}{i} 
=  \sum_{i=0}^r  {r\brace i} \dfrac{(2^n)_i}{2^i}   ,
\end{equation} 
where ${r\brace i}$ is again a Stirling number of the second kind.

We use these straight moment to calculate the moments about the mean,
 \begin{equation} \label{MoMean}
  E[(X-\mu)^r] 
= \sum_{i=0}^r (-1)^{r-i} \binom{r}{i}E[X^i] \mu^{r-i}, \;\ \;\ r \geq 0,
\end{equation}
where $\mu= E[X] = 2^{n-1}.$

Some examples of these moments are
\begin{align*}
Var(X) =E[(X-\mu)^2] &= 2^{n-2}  , \\
E[(X-\mu)^4] &= 2^{n-4}(3\cdot2^n-2) , \\
E[(X-\mu)^6] &=  2^{n-6}(15\cdot2^{2n}-30\cdot2^n+16), \\
E[(X-\mu)^8] &=  2^{n-8}(105\cdot2^{3n}-420\cdot2^{2n}+588\cdot2^n-272), \\
E[(X-\mu)^{2r+1}] &= 0 , \;\ r \geq 0.
\end{align*}

The general formula is
\begin{align*}
E[(X-\mu)^r] &=  \sum_{i=0}^r \binom{r}{i}E[X^i] (-2^{n-1})^{r-i} \\
&=  \sum_{i=0}^r \left(\dfrac{-1}{2}\right)^{r-i}\binom{r}{i} 
\sum_{j=0}^i  {i \brace j}(2^n)_j   \dfrac{\left(2^{n}\right)^{r-i} }{2^j}
\end{align*}
Therefore coefficient of $2^{n(r-t)}$ is
\begin{equation} \label{coeff0}
  \sum_{i=t}^r \left(\dfrac{-1}{2}\right)^{r-i}  \binom{r}{i} 
\sum_{j=i-t}^i \dfrac{1}{2^j}{i \brace j}s(j,i-t).  
\end{equation}
where $s(i,k)$ is a Stirling number of the first kind.

\textbf{New identities}

We relate the results of corollary \ref{Five} and
observation that $M_{2r+1}(n)=0$ to
\eqref{coeff0} to gain some new identities.

For case $r$ is odd and $ 0 \leq t \leq r$
and case $r$ is even and $ 0 \leq t < r/2,$
\[ \sum_{i=t}^r \left(\dfrac{-1}{2}\right)^{r-i}  \binom{r}{i} 
\sum_{j=i-t}^i \dfrac{1}{2^j}{i \brace j}s(j,i-t)  = 0.   \]

For $r=2k$ and $t=k$, we have
\[  \sum_{i=t}^r \left(\dfrac{-1}{2}\right)^{r-i}  \binom{r}{i} 
\sum_{j=i-t}^i \dfrac{1}{2^j}{i \brace j}s(j,i-t)  
= \dfrac{(2k)!}{8^kk!} .   \]

\subsection{Moments of Numbers of 1-dimensional Cube in $f$}
Now we consider $X_1(f)$, a random number of 1-dimension subspace 
that is contained in $f$.
For example, consider $n=3, f=\{ 000,001,101,011,111\}$.
We have $X_0(f) = 5$ and $X_1(f) = 5$ namely $00B, 0B1, B01, 1B1, B11$.
Note that $0 \leq X_1(f) \leq n2^{n-1}.$

Define the generating function $F_n(q)$ by $F_n(q) = \sum_{k=0}^{\infty}  a_k  q^k,  $
where $k$ is the number of 1-dimensional cube in the set $f$ 
of boolean function of length $n$ and 
$a_k$ is the number of boolean functions $f$ 
that has $k$ number of 1-dimensional cubes. 
This time $F_n(q)$ looks complicate. Some of them are
\begin{align*}
F_1(q) &= \dfrac{1}{4}(3+q), \\
F_2(q) &= \dfrac{1}{16}(7+4q+4q^2+q^4) \\
F_3(q) &= \dfrac{1}{256}(35+36q+54q^2+40q^3+30q^4+24q^5+16q^6+12q^7+8q^9+ q^{12}).
\end{align*}
It is very difficult, if not impossible, to find a general formula for $F_n(q).$
However we show a strong empirical evidence in section \ref{Exp} 
that the distribution (case $k=1$) once again converges to normal.

Let $X:=X_1(f)$. 
We now apply the overlapping state approach 
to calculate $E[X^r], r \geq 0.$ A few of them are
\begin{align*}
E[X] &= \dfrac{1}{2^2}n2^{n-1}, \\
E[X^2] &= \mbox{ 1-cube overlap} + \mbox{ 0-cube overlap} + \mbox{ no overlap} \\
&= \sum_{i=1}^{n2^{n-1}}E[X_i^2] + \sum_{i=1}^{4n(n-1)2^{n-2}}E[X_i^2] + \sum_{i=1}^{Rest}E[X_i^2] \\
&= \dfrac{n2^{n-1}}{2^2}+\frac{4n(n-1)2^{n-2}}{2^3}+\dfrac{(n2^{n-1})^2-4n(n-1)2^{n-2}-n2^{n-1}}{2^4} \\
&= \dfrac{n2^n}{64}(2+4n+n2^n) ,   
\end{align*}
\begin{align*}
E[X^3] &= \mbox{ 1-cube overlap} + \mbox{ 0-cube overlap} + \mbox{ no overlap} \\
&= \dfrac{a}{2^2}+\frac{b}{2^3} +\frac{c}{2^4}
+\frac{3a(a-1)}{2^4}+\dfrac{b(a-3n+2)}{2^5}\\
&  +\dfrac{a^3-[a+b+c   +3a(a-1)   +b(a-3n+2)]}{2^6},  \\
& \mbox{where } \;\ a=n2^{n-1}, b=3\cdot 4n(n-1)2^{n-2} 
\mbox{ and } \;\ c= 4\cdot 8n(n-1)(n-2)2^{n-3} \\ 
&= \dfrac{n^2}{2^9}[24n2^n+6(2n+1)2^{2n}+n2^{3n}].
\end{align*}
  
We did not manage to find formulas for other moments.
Some moments about the mean are
\begin{align*}
Var(X) = E[(X-\mu)^2]  &= \dfrac{n(2n+1)2^{n}}{32}  , \\
E[(X-\mu)^3] &= \dfrac{3n^32^n}{64} .
\end{align*}

\subsection{Moments of Numbers of $k$-dimensional Cube in $f$}  \label{BFMo}
 Let $k$ be the size of the cube.
Example, the cube of size 2 is $\{ 00,01,10,11 \}$
or the cube of size 2: 11B0B is \{ 11000, 11001, 11100, 11101\}.
Let the random variable $X_k$ be the number of $k$-dimensional cubes
in the boolean function of length $n$. 

Our goal is to design an algorithm that inputs symbolic $n$ and $k$
and numeric $r$ and outputs $E[X^r_k]$, the $r$-th moment
of $k$-dimensional cubes contains in boolean function $f$, of length $n$ 
($n$ variables). 

The first moment (for any $k$) is
\[ 
E[X] = \sum_{i=1}^{\binom{n}{k}2^{n-k}} E[X_{i}] = \binom{n}{k}2^{n-k} \cdot \frac{1}{2^{2^k}},
\]
since $E[X_{i}] = P(X_i = 1) =  \dfrac{1}{2^{2^k}}$ 
as all the $2^k$ entries of $k$-cube must be there. 

The calculation of the second moment is more complicated as 
we must keep track of interactions between $X_{i}$ and $X_{j}$. 

For $k=2:$
\begin{align*}
E[X^2] &= \mbox{ 2-cube overlap} +\mbox{ 1-cube overlap} +
\mbox{ 0-cube overlap}+ \mbox{ no overlap} \\
&= \binom{n}{2}2^{n-2}E[X_1^2] 
+ 4\binom{n}{1,1,1,n-3}2^{n-3}E[X_2^2] \\
&+ 16\binom{n}{2,2,n-4}2^{n-4}E[X_3^2]
 + Rest \cdot E[X_4^2] \\
&= \dfrac{\binom{n}{2}2^{n-2}}{2^4} 
+ \dfrac{4\binom{n}{1,1,1,n-3}2^{n-3}}{2^6} 
+ \dfrac{16\binom{n}{2,2,n-4}2^{n-4}}{2^7}
 + \dfrac{ Rest}{2^8} \\
&= \dfrac{n(n-1)}{2^{14}}[8(2n^2+2n+3)2^n+n(n-1)4^n]  .    
\end{align*}

We can see the pattern for general $k$:  (think of $i$ as an $i$-cube intersects
between two objects) 
\begin{align*}
E[X^2] &= \sum_{i=0}^{k} 2^{2(k-i)}\binom{n}{i,k-i,k-i,n-2k+i}2^{n-2k+i} \cdot \dfrac{1}{2^{2^{k+1}-2^{i}}}
+ \dfrac{Rest}{2^{2^{k+1}}} \\
&= \sum_{i=0}^{k} \dfrac{\binom{n}{i,k-i,k-i,n-2k+i}2^{n-i} }{2^{2^{k+1}-2^i}}
+ \dfrac{Rest}{2^{2^{k+1}}} \\
&= \sum_{i=0}^{k} \dfrac{\binom{n}{i,k-i,k-i,n-2k+i}2^{n-i} }{2^{2^{k+1}}}\cdot(2^{2^i}-1)
+ \dfrac{\left[\binom{n}{k}2^{n-k}\right]^2}{2^{2^{k+1}}}   ,
\end{align*}
where $\displaystyle Rest =  \left[\binom{n}{k}2^{n-k}\right]^2
- \sum_{i=0}^{k} \binom{n}{i,k-i,k-i,n-2k+i}2^{n-i} .$ 

The formula for the case $k=3$ is
\[ E[X^2] = \dfrac{n(n-1)(n-2)}{2^{24}3^2}[16(4n^3+6n^2+80n+363)2^n+n(n-1)(n-2)4^n].\]

The general formula for variance is
\[ E[(X-\mu)^2] = E[X^2]-E[X]^2 = \sum_{i=0}^{k} 
\dfrac{\binom{n}{i,k-i,k-i,n-2k+i}2^{n-i} }{2^{2^{k+1}}}\cdot(2^{2^i}-1). \]
The leading term of $E[(X-\mu)^2]$ can be seen to be $\dfrac{n^{2k}2^n}{(k!)^22^{2^{k+1}}}$ .

The calculation for the third moment
is much more complicated,
we only manage to find it for 
the case $k=0$ and $k=1$.


\section{Dominos on Boolean Boards}

Consider the rectangle board of size $m \times n$ with
each of the square filled with number 0 or 1 randomly.
Let $X$ be the random variable of number of 
2-by-1 domino piece with ``the same number''   
vertically or horizontally. 

For example, on 3-by-3 board:
\begin{center}
\includegraphics[scale=0.1]{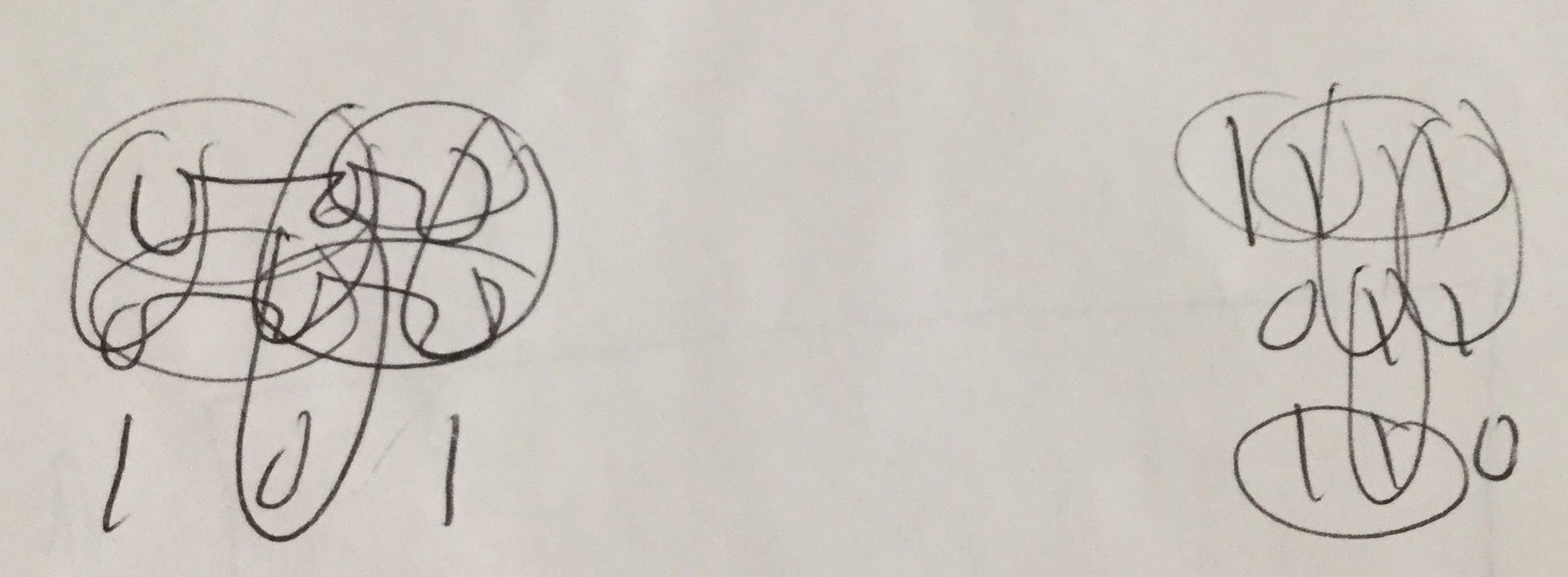}  
\end{center} 
$X_{left} = 8,  \;\ X_{right} = 7$.

We want to find $E[X^r]$ and $E[(X-\mu)^r]$ 
for a fixed $r$ but general $m,n.$ 

The simplest case, $E[X]$ for a 2-by-$n$ board is
\[  \frac{1}{2},  \frac{4}{2},  \frac{7}{2},  \frac{10}{2},  \frac{13}{2},  \frac{16}{2},
 \frac{19}{2}, \dots  \]
We see that $E[X_{(2,n)}]= \dfrac{3n-2}{2}.$  

The data of $2^{mn} \cdot E[X] $ from the program are 
\begin{center}
\begin{tabular}{ c|cccccccc }
  $m : n$ & 1& 2 & 3 &4& 5 & 6&7 &8 \\ \hline 
  1 & 0& 2& 8& 24& 64& 160& 384& 896 \\
  2 & 2 & 32& 224& 1280& 6656& 32768& 155648 & 720896 \\
  3 & 8 & 224& 3072& 34816& 360448 \\
  4 &  24 &1280& 34816& 786432
\end{tabular}
\end{center}

These numbers are $2^{mn-1}(2mn-m-n).$ 
Hence $\mu := E[X_{(m,n)}] = mn-\frac{m}{2}-\frac{n}{2}.$
This observation can be proved combinatorially too. 

Let's move to the second moment. 
The data of $2^{mn} \cdot E[X^2]$ are 

\begin{center}
\begin{tabular}{ c|cccccccc }
  $m : n$ & 1& 2 & 3 &4& 5 & 6&7 &8 \\ \hline 
  1 &  0&2& 12& 48& 160& 480& 1344& 3584\\
  2 &  2& 80& 896& 7040& 46592& 278528& 1556480& 8290304\\
  3 &  12& 896& 19968& 313344& 4145152\\
  4 &  48& 7040& 313344& 9830400
\end{tabular}
\end{center}

These numbers are $2^{mn}(mn-m/2-n/2+1/2)(mn-m/2-n/2)= 2^{mn}\mu(\mu+1/2).$ 
Hence $E[X^2]=  \mu(\mu+1/2) = \mu^2+\frac{\mu}{2}.$
It follows that $Var = E[X^2]-E[X]^2 = \frac{\mu}{2}  $. 
This observation can be shown formally using the overlapping stage method.
\begin{align*}
E[X^2] &= \sum_{i,j} E[X_iX_j] = \sum_i E[X_i^2] + \sum_{i,j: i\neq j}E[X_iX_j]  \\
&= \frac{1}{2}A+ \frac{1}{4}A(A-1) = \frac{A^2}{4}+\frac{A}{4} \\
&= \mu^2+\frac{\mu}{2} ,
\end{align*}
where $A = 2mn-m-n$, the number of possible ways to 
put the domino on an $m$-by-$n$ board.

Next is the third moment. 
The data of $2^{mn} \cdot E[X^3]$ are 

\begin{center}
\begin{tabular}{ c|cccccccc }
  $m : n$ & 1& 2 & 3 &4& 5 & 6&7 &8 \\ \hline 
  1 &  0& 2& 20& 108& 448& 1600& 5184& 15680 \\
  2 &  2& 224& 3920& 41600& 346112& 2490368& 16265216& 99123200\\
  3 &  20& 3920& 138240& 2959360& 49561600\\
  4 &  108& 41600& 2959360& 127401984
\end{tabular}
\end{center}

These numbers are $2^{mn}\mu^2(\mu+3/2).$ 
Then $E[X^3] =\mu^2(\mu+3/2).$
Then it follows that $E[(X-\mu)^3] = E[X^3]-3E[X^2]E[X]+2E[X]^3 = 0  $. 
For formal proof, we apply the overlapping stage approach.
\begin{align*}
E[X^3] = \sum_{i,j,k} E[X_iX_jX_k] &= \sum_i E[X_i^3] 
+ \sum_{i,j: i\neq j}E[X_i^2X_j] + \sum_{i,j,k: i\neq j\neq k}E[X_iX_jX_k]  \\
&= \frac{1}{2}A+ \frac{1}{4}(3)A(A-1) +\frac{1}{8}A(A-1)(A-2)  \\
&= \mu^3+\frac{3\mu^2}{2} ,     
\end{align*}
where again $A=2\mu.$

The higher moments can be calculated directly by
\[     E[X^r] = \sum_{i=1}^r {r \brace i} \dfrac{(A)_i}{2^i} ,\]
where $(A)_i= A(A-1)(A-2)\dots(A-i+1).$

Some examples are
\begin{align*}
E[X^4] &= \dfrac{\mu(2\mu+1)(2\mu^2+5\mu-1)}{4},  \\
E[X^5] &= \dfrac{\mu^2(4\mu^3+20\mu^2+15\mu-5)}{4},  \\
E[X^6] &= \dfrac{\mu(2\mu+1)(4\mu^4+28\mu^3+31\mu^2-23\mu+4)}{8}.
\end{align*}

The moments about the mean are
\begin{align*}
 E[(X-\mu)^{2r+1}] &= 0,   \;\  r=0,1,2,..., \\
Var := E[(X-\mu)^2] &= \dfrac{\mu}{2}, \\
E[(X-\mu)^4] &= \dfrac{\mu(3\mu-1)}{4}, \\
E[(X-\mu)^6] &= \dfrac{\mu(15\mu^2-15\mu+4)}{8}.
\end{align*}

\subsection{Asymptotic Normality of $X$ on the Board of size 1-by-$n$}

Consider board of size 1-by-$n$.
We will work toward the general formula of the leading terms of $E[(X-\mu)^r]$
and use it to show normality of $X.$

It is easy to see that the probability generating function 
$F_n(q)$ is $\left(\dfrac{1+q}{2}\right)^{n-1}.$ 
The centralized probability generating function 
\[ G_n(q) = \dfrac{(1+q)^{n-1}}{q^{(n-1)/2}2^{n-1}}  = \left(\dfrac{q^{1/2}+q^{-1/2}}{2}\right)^{n-1}. \] 

We will use the same technique as in the inversion number section
of getting a recurrence. We start with the fact that
\begin{equation} \label{RecDo}
 G_n(1+z) = \sum_{r=0}^{\infty} B_r(n)z^r,
\end{equation}
where $B_r(n) := E[\binom{X}{r}]$, the binomial moment. 

Also notice that 
\[P_n(q) := \dfrac{G_n(q)}{G_{n-1}(q)} = \dfrac{q^{1/2}+q^{-1/2}}{2}.  \]
Hence,
\[  P_n(1+z) = \dfrac{2+z}{2(1+z)^{1/2}} = 
1+\frac{1}{8}z^2-\frac{1}{8}z^3+\frac{15}{128}z^4
-\frac{7}{64}z^5+ \dots .    \]

Then we form the recurrence using \eqref{RecDo} and $G_n(1+z)=P_n(1+z)G_{n-1}(1+z)$:
\begin{equation} \label{RecBDo}  
B_r(n) = B_r(n-1)+\frac{1}{8}B_{r-2}(n-1)-\frac{1}{8}B_{r-3}(n-1)+\dots .
\end{equation}

With the help of computer, we conjecture the formula of the leading term 
of $B_r(n)$ (general $r$) and 
apply the induction on $r$ to \eqref{RecBDo} to prove the formula. 
 
The Taylor expansion of $G_n(1+z)$ around $z=0$ looks like

\[1+\dfrac{(n-1)}{8}z^2-\dfrac{(n-1)}{8}z^3
+\dfrac{(n-1)(n+13)}{128}z^4
-\dfrac{(n-1)(n+5)}{64}z^5+\dots \]

We conjecture that
\begin{align*}
B_{2r}(n) &= \dfrac{n^{r}}{r!2^{3r}} + \;\ \mbox{ lower order terms ,} \\
B_{2r+1}(n) &= -\dfrac{n^{r}}{(r-1)!2^{3r}} + \;\ \mbox{ lower order terms.}
\end{align*}

Induction using equation \eqref{RecBDo} goes through without a problem
(the reader is invited to check that themselves). 
Finally we turns these binomial moment to straight moment by 
a Stirling number of the second kind, i.e. $E[X^r] = \sum_i {r \brace i}B_i(n) \cdot i!.$
We see that 
\begin{align*}
E[(X-\mu)^{2r}] &= \dfrac{1}{r!2^{3r}}\cdot (2r)!n^r + \;\ \mbox{ lower order terms ,} \\
E[(X-\mu)^{2r+1}] &= 
\left(-\dfrac{1}{(r-1)!2^{3r}}\cdot (2r+1)! +  \frac{(2r+1)(2r)}{2} \dfrac{1}{r!2^{3r}}\cdot(2r)!  \right) n^r
+  \;\ \mbox{ lower order terms ,}  \\
&= 0 \cdot n^r + \;\ \mbox{ lower order terms.}
\end{align*}

Hence, as $n \to \infty,$
\[ \dfrac{E[(X-\mu)^{2r}]}{Var^r} =   \dfrac{(2r)!}{r!2^{r}}    \;\
\mbox{ and } \;\ \dfrac{E[(X-\mu)^{2r+1}]}{Var^{r+1/2}} = 0, \] 
and the normality of this case is proved.

\begin{rem}
We can do better by conjecture (and prove) more leading terms, i.e.
we claim that
\begin{align*}
B_{2r}(n) &= \dfrac{n^{r}}{r!2^{3r}} +\dfrac{r(4r-5)n^{r-1}}{(r-1)!2^{3r}}
+ \;\ \mbox{ lower order terms ,} \\
B_{2r+1}(n) &= -\dfrac{n^{r}}{(r-1)!2^{3r}} -\dfrac{r(4r^2-3r-4)n^{r-1}}{3(r-1)!2^{3r}}
+\;\ \mbox{ lower order terms.}
\end{align*}
\end{rem}

\subsection{Asymptotic Normality of $X$ on the Board of size $m$-by-$n$}

For any fix $m$, the normality of $X$, a random number of dominoes,
on the board of size $m$-by-$n$ as $n \to \infty$ 
follows from section 6.1 and the discussion before it. 
We learned that, for each $r$, $E[X^r]$ are all the same 
when written as a function of $\mu$ 
for any $m$ and $n.$ Then the result follows from that 
$X$ of board of size 1-by-$n$ is asymptotically normal.
We note that, although the moments are the same for different $m$ and $n$, 
the generating functions for a fixed $m\geq 2$
are more complicated and don't seem to have nice patterns.

\section{The Moment Generating Function, $\phi(t)$}

In this section, we show the normality result of 
inversion numbers/major index and Boolean board of size 1-by-$n$ 
again but with the more direct method. We show that 
\[\lim_{n \to \infty} G_n(e^{t/{\sigma}}) = e^{t^2/2}.\]
The outline of this method was already mentioned 
in section 3.

The generating function for Boolean functions case is more complicated.
We found a good simpler approximating generating function 
and try to show the normality of that approximating generating function instead. 
\subsection{Inversion Numbers/Major Index}
The centralized generating function of inversion numbers/major index is
\[   G_n(q) = \dfrac{1}{n!q^{n(n-1)/4}}\prod_{i=1}^n \dfrac{1-q^i}{1-q} 
=  \dfrac{1}{n!}\prod_{i=1}^n \dfrac{q^{i/2}-q^{-i/2}}{q^{1/2}-q^{-1/2}}.\] 

We consider
\begin{align*}
\phi(t) := G_n(e^{t/{\sigma}}) &=  
\dfrac{1}{n!}\dfrac{  \prod_{i=1}^n \left(\dfrac{e^{it/2\sigma}-e^{-it/2\sigma} }{2}\right)}
{ \left(\dfrac{e^{t/2\sigma}-e^{-t/2\sigma}}{2}\right)^n}  \\
&=     \dfrac{1}{n!}\dfrac{  \prod_{i=1}^n 
{ \left(\dfrac{it}{2\sigma}+\dfrac{1}{3!}\left(\dfrac{it}{2\sigma}\right)^3 
+\dfrac{1}{5!}\left(\dfrac{it}{2\sigma}\right)^5+\dots\right)}}
{ \left(\dfrac{t}{2\sigma}+\dfrac{1}{3!}\left(\dfrac{t}{2\sigma}\right)^3 
+\dfrac{1}{5!}\left(\dfrac{t}{2\sigma}\right)^5+\dots\right)^n}  \\   
&=  \dfrac{  \prod_{i=1}^n 
{ \left(1+\dfrac{1}{3!}\left(\dfrac{it}{2\sigma}\right)^2 
+\dfrac{1}{5!}\left(\dfrac{it}{2\sigma}\right)^4
+\dots\right)}}
{ \left(1+\dfrac{1}{3!}\left(\dfrac{t}{2\sigma}\right)^2
+\dfrac{1}{5!}\left(\dfrac{t}{2\sigma}\right)^4+\dots\right)^n}  
\end{align*} 

Next we consider ${n \to \infty}$
and apply the fact that $\sigma^2= \dfrac{n(n-1)(2n+5)}{72}$  
on both numerator and denominator.
First we note that 
\[  \lim_{n \to \infty} \sum_{i=1}^n 
{ \left(\dfrac{1}{3!}\left(\dfrac{it}{2\sigma}\right)^2 
+\dfrac{1}{5!}\left(\dfrac{it}{2\sigma}\right)^4
+\dots\right)}  = \dfrac{t^2}{2} ,  \]
and
\[  \lim_{n \to \infty}  
 n\left(\dfrac{1}{3!}\left(\dfrac{t}{2\sigma}\right)^2 
+\dfrac{1}{5!}\left(\dfrac{t}{2\sigma}\right)^4
+\dots\right)  = 0 .  \]

Lastly, since  $\ln(\prod(1+x_i)) = \sum \ln(1+x_i) 
\approx \sum x_i,$ when $x_i$ is small, we have
\[ \lim_{n \to \infty} G_n(e^{t/{\sigma}}) =
\lim_{n \to \infty} \dfrac{  \prod_{i=1}^n 
{ \left(1+\dfrac{1}{3!}\left(\dfrac{it}{2\sigma}\right)^2 
+\dfrac{1}{5!}\left(\dfrac{it}{2\sigma}\right)^4
+\dots\right)}}
{ \left(1+\dfrac{1}{3!}\left(\dfrac{t}{2\sigma}\right)^2
+\dfrac{1}{5!}\left(\dfrac{t}{2\sigma}\right)^4+\dots\right)^n}  
=\dfrac{e^{t^2/2}}{e^0} = e^{t^2/2}.\]

Therefore the distribution of inversion numbers 
on the permutation of length $n$ is normal as $n \to \infty$.

\subsection{Boolean Functions }  \label{Exp}

We consider asymptotic normality of number 
of $k$-dimensional cube over the space of Boolean 
functions of length $n$ ($n \to \infty$). We learned
from section \ref{BFMo} that it is very difficult to calculate
the moment and the generating functions of these numbers. 
So we have an idea that we assume the independent
between each objects $X$ that we count and use it
as an approximation of the actual numbers.
\subsubsection*{Experimental Math. on Generating Functions} 
We let $Y$ be the random number of this 
$k$-dimensional cube that we mentioned. We want
to come up with its generating function and test the approximation.

We think of $m$ as a number of elements in Boolean function $f$. 
Let $p_k$ be the probability that the randomly chosen $2^k$ elements, 
each with length $n$, forms a $k$-cube. 
\[ p_k=\binom{n}{k}\dfrac{(2^k)!}{2^k2^{n(2^k-1)}}.  \]
For examples, $p_0=1$ and $p_1=\dfrac{n}{2^n}.$

The probability generating function $H_n(q)$ is
\begin{align*} 
H_n(q) &=  \dfrac{1}{2^{2^n}}\sum_{m=0}^{2^n}
\binom{2^n}{m}  \sum_{s=0}^{M} 
\binom{M}{s}p^s(1-p)^{M-s}q^s  , \;\  \mbox{where } M = \binom{m}{2^k}\\
&=  \dfrac{1}{2^{2^n}}\sum_{m=0}^{2^n}
\binom{2^n}{m}(pq+1-p)^{M} .
\end{align*}

We show the distributions of numbers of $1$-cube ($k=1$) 
with boolean function of length $n=4$
where the blue bars are from empirical data and the aqua bars are from $H_n(q).$
It looks quite close even with small $n.$
\begin{center}
\includegraphics[scale=0.45]{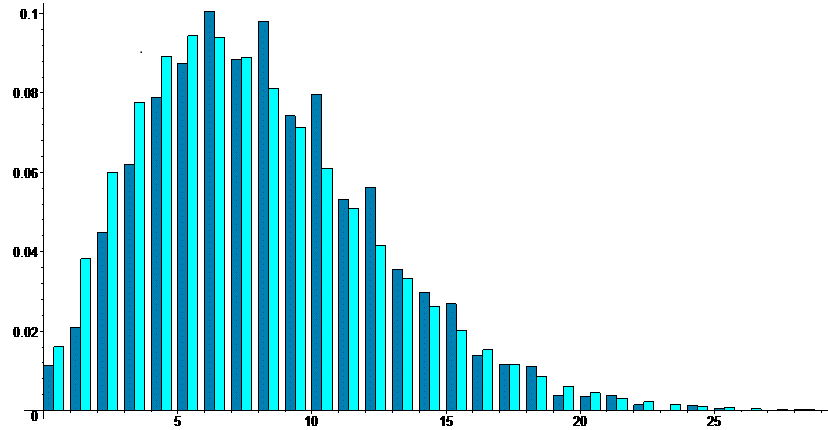}  
\end{center}

\subsubsection*{Experimental Math. on Moments} 
We also compare the actual average
$E[X]= \binom{n}{k}\dfrac{2^{n-k}}{2^{2^k}}$
with $E[Y] = H'_n(q)|_{q=1}$ for $k=0,1,2.$
\begin{align*} 
k=0, \;\ E[X]=2^{n-1}, \;\ E[Y] &= 
\dfrac{1}{2^{2^n}}\sum_{m=0}^{2^n} \binom{2^n}{m}\cdot m = 2^{n-1}. \\
k=1, \;\ E[X]=\dfrac{n2^n}{8}, \;\  E[Y] &= 
\dfrac{1}{2^{2^n}}\sum_{m=0}^{2^n} \binom{2^n}{m}\cdot \binom{m}{2} 
= \dfrac{n(2^n-1)}{8}. \\
k=2, \;\ E[X]=\binom{n}{2}\dfrac{2^n}{64}, \;\  E[Y] &= 
\dfrac{1}{2^{2^n}}\sum_{m=0}^{2^n} \binom{2^n}{m}\cdot \binom{m}{4} 
= \binom{n}{2}\dfrac{2^n-6+(11)2^{-n}-(6)2^{-2n})}{64}. 
\end{align*}
For the variances
\begin{align*} 
k=0, \;\ Var[X] &=\dfrac{2^n}{2^2}, \;\ Var[Y] =  \dfrac{2^n}{2^2}. \\
k=1, \;\ Var[X] &=\dfrac{n(2n+1)2^n}{2^5}, \;\  
\mathcal{L}\{Var[Y]\} = \dfrac{n[(2n+4)2^n]}{2^5}. \\
k=2, \;\ Var[X] &=\dfrac{n(n-1)(2n^2+2n+3)2^n}{2^{11}}, \\  
\mathcal{L}\{Var[Y]\} &= \dfrac{n(n-1)(n^2-n+8)2^n}{2^{10}}. 
\end{align*}
Their leading terms are the same.

\subsubsection*{Experimental Math. on Normality} 
Some leading terms for case $k=1$ of 
$E[(Y-\mu)^r], r=0,1,2,\dots$ are
\[  1, 0, \dfrac{n^22^n}{16}, \dfrac{3n^32^n}{64}
, \dfrac{3n^44^n}{256}, \dfrac{15n^54^n}{512}, \dfrac{15n^68^n}{4096}
, \dfrac{315n^78^n}{16384}, \dfrac{105n^816^n}{65536} ,... \]

Some leading terms for case $k=2$ of 
$E[(Y-\mu)^r], r=0,1,2,\dots$ are
\[  1, 0, \dfrac{n^42^n}{2^{10}}, \dfrac{9n^62^n}{2^{15}}, 
\dfrac{3n^84^n}{2^{20}}, \dfrac{45n^{10}4^n}{2^{24}}
,\dfrac{15n^{12}8^n}{2^{30}},... \]

These moments satisfy the condition that asymptotically 
$\dfrac{E[(Y-\mu)^{2r}]}{\sigma^{2r}} = \dfrac{(2r)!}{2^rr!} $
and  $\dfrac{E[(Y-\mu)^{2r+1}]}{\sigma^{2r+1}} = 0. $
We realize that the distributions for $k=1$ and $k=2$ 
converge to normal once again.

There are overwhelming evidences that the distribution of 
a random variable $Y$ (as does $X$) should converge to normal.
Although we did not quite manage to show it from the generating
$H_n(q)$ that we defined at the beginning of the section.

\subsection{Boolean Board of Size 1-by-$n$  }

The centralized generating function of 1-by-$n$ boolean board is
\[ G_n(q) = \left(\dfrac{q^{1/2}+q^{-1/2}}{2}\right)^{n-1}.\]
Given that $\sigma^2 = \dfrac{n-1}{4}$, then
\begin{align*} \label{Hot}
  G_n(e^{t/\sigma}) &= \left(\dfrac{e^{\frac{t}{2\sigma}}+e^{\frac{-t}{2\sigma}}}{2}\right)^{n-1} \\
  &=  \left(1+\dfrac{1}{2!}\left(\dfrac{t}{2\sigma}\right)^2 
+\dfrac{1}{4!}\left(\dfrac{t}{2\sigma}\right)^4+\dots\right)^{n-1} \\ 
&=  \left(1+\dfrac{1}{2!}\dfrac{t^2}{n-1} 
+\dfrac{1}{4!}\dfrac{t^4}{(n-1)^2}+\dots\right)^{n-1} \\
&= e^{t^2/2}  \;\  \mbox{ as } n \to \infty.
 \end{align*}
Therefore the distribution of number of 2-by-1 dominoes 
on the boolean board of size 1-by-$n$ is normal as $n \to \infty$.


\begin{thebibliography}{9}

\bibitem{Z6} Andrew Baxter, and Doron Zeilberger,
{\em The Number of Inversions and the Major
Index of Permutations are Asymptotically Joint-Independently-Normal (Second Edition)},
The Personal Journal of Shalosh B. Ekhad
and Doron Zeilberger, Feb 4th, 2011.

\bibitem{Z5} E. Rodney Canfield, Svante Janson,
and Doron Zeilberger,
{\em The Mahonian Probability Distribution on
Words is Asymptotically Normal},
Advances in Applied Mathematics
v. 46 (2011), 109-124 (Special Dennis Stanton issue).

\bibitem{Fe} William Feller, 
{\em An Introduction to Probability Theory and Its Application}, 
volume 1, three editions. John Wiley and sons. 
First edition: 1950. Second edition: 1957. Third edition: 1968.

\bibitem{Mac}  Percy A. MacMahon,
{\em The indices of permutations and the derivation 
therefrom of functions of a single variable associated 
with the permutations of any assemblage of objects}, 
Amer. J. Math. 35(3), (1913), 281–322.

\bibitem{GouldJack} John Noonan and Doron Zeilberger,
{\em The Goulden-Jackson Cluster Method: 
Extensions, Applications, and Implementations},
J. Difference Eq. Appl. 5 (1999), 355-377.

\bibitem{Z1} Doron Zeilberger,
{\em The Joy of Brute Force: The Covariance
of The number of Inversions and the Major Index},
The Personal Journal of Shalosh B. Ekhad
and Doron Zeilberger, 1995.

\bibitem{Z2} Doron Zeilberger,
{\em Symbolic Moment Calculus I.:
Foundations and Permutation Pattern Statistics},
Annals of Combinatorics 8(2004), 369-378.

\bibitem{Z3} Doron Zeilberger,
{\em Symbolic Moment Calculus II.:
Why is Ramsey Theory Sooooo Eeeenormously Hard?},
"Combinatorial Number Theory", B. Landman et. al,
editors, in Celebration of the 70th Birthday of
Ronald Graham, de Gruyter, 2007.

\bibitem{Z4} Doron Zeilberger,
{\em The Automatic Central Limit Theorems Generator
(and Much More!)}, Advances in Combinatorial Mathematics:
Proceedings of the Waterloo Workshop in Computer Algebra
2008 in honor of Georgy P. Egorychev", chapter 8, pp. 165-174.







\end{thebibliography}
\end{document}